\theoremstyle{plain}
\newtheorem{theorem}{Theorem}[section]
\newtheorem{lemma}[theorem]{Lemma}
\newtheorem{corollary}[theorem]{Corollary}
\newtheorem{proposition}[theorem]{Proposition}
\newtheorem*{step1}{Step 1}
\newtheorem*{step2}{Step 2}
\newtheorem*{step3}{Step 3}
\theoremstyle{definition}
\newcommand{\lset}{\{}
\newcommand{\rset}{\}}
\newcommand{\inv}{^{-1}}
\newcommand{\supe}{\supseteq}
\newcommand{\sube}{\subseteq}
\newcommand{\mdot}{{\cdot}}
\newcommand{\ZZ}{\mathbb{Z}}
\newcommand{\QQ}{\mathbb{Q}}
\newcommand{\CC}{\mathbb{C}}
\newcommand{\us}{^{*}}
\newcommand{\vf}{\varphi}
\newcommand{\vt}{\vartheta}
\newcommand{\ve}{\varepsilon}
\newcommand{\ch}{\operatorname{char}}
\newcommand{\Imm}{\operatorname{Im}}
\newcommand{\nor}{\triangleleft}
\newcommand{\tr}{\operatorname{tr}}
\newcommand{\cmpl}[1]{%
    \sbox\z@{$#1$}%
    \dimen@=\wd\z@
    \advance \dimen@ -\strip@pt\fontdimen\@ne\textfont\@ne \ht\z@
    \setbox\tw@=\hb@xt@\dimen@{}%
    \ht\tw@=\ht\z@ \dp\tw@=\dp\z@
    \box\z@
    \llap{$\overline{\box\tw@}$}%
} 
\newcommand{\cx}{\cmpl{x}}
\newcommand{\cy}{\cmpl{y}}
\newcommand{\cz}{\cmpl{z}}
\newcommand{\cp}{\cmpl{\varphi}}
\newcommand{\fg}{\mathfrak{fg}}
\newcommand{\la}{\langle}
\newcommand{\ra}{\rangle}
\newcommand{\fl}{\mathfrak{fl}}
\newcommand{\ad}{\mathrm{ad}\,}
\begin{document}
\title{Finite Presentation}

\author{D.~S.~Passman}
\address[D.~S.~Passman]{Department of Mathematics, University of Wisconsin-Madison, Madison,
Wisconsin 53706, USA} 
\email{passman@math.wisc.edu}

\author{L.~W.~Small}
\address[L.~W.~Small]{Department of Mathematics, University of California-San Diego, La Jolla, California 92093, USA} 
\email{lwsmall@ucsd.edu}

\subjclass[2010]{}
\keywords{groups, Lie algebras, associative algebras, finitely generated objects,
finitely presented objects, coherence}

\begin{abstract}
This paper surveys basic properties of finite presentation in groups,
Lie algebras and rings. It includes some new results and also
new, more elementary proofs, of some results that are already in the literature. 
In particular, we discuss examples of Stallings and of Roos 
on coherence by using a
purely algebraic, non-homological, approach.
\end{abstract}

\maketitle

\section{Introduction}

In this paper we study finitely presented algebraic objects in four different contexts. Admittedly, we could work with more
general universal algebras, but we prefer to keep things concrete.  The objects of interest here are:
\begin{enumerate}
\item groups,
\item Lie algebras over a field $K$,
\item $R$-algebras where $R$ is a ring, and
\item more general rings.
\end{enumerate}
In case (iii), we mean rings $S$ that contain $R$ as a subring with the same 1 and are
generated by $R$ and its centralizer $\CC_S(R)$ in $S$.
Now with each of these there are free objects, namely free groups, free Lie algebras, free $R$-algebras and free rings.
Furthermore, there are homomorphisms and their kernels. Of course, kernels for groups are normal subgroups, while the others are
Lie ideals and just plain ideals.

We say that one of these objects $A$ is \emph{finitely presented} if there exists a finitely generated free object $F$
and an epimorphism $\pi\colon F\to A$ such that the kernel of $\pi$ is finitely generated as a kernel, that is as either a
normal subgroup or an ideal. Note that a finitely presented object is necessarily finitely generated. Some basic results are
given below. 

\begin{lemma}
Let $A$ and $B$ be finitely generated objects of the same type and let $A$ be finitely presented. If
$\vt\colon B\to A$ is an epimorphism, then the kernel of $\vt$ is finitely generated as a kernel.
\end{lemma}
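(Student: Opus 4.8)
The plan is to exhibit an explicit finite generating set for $\ker\vt$, assembled from two pieces: a finite set of ``relators'' of $A$, pushed into $B$, together with finitely many ``correction'' elements that are available because $B$ is finitely generated. As in the definition of finite presentation, fix a free object $F$ and an epimorphism $\pi\colon F\to A$ whose kernel $N$ is generated as a kernel by a finite set $u_1,\dots,u_r$. Since $F$ is free and $\vt$ is surjective, I can lift $\pi$ through $\vt$: pick for each free generator $x$ of $F$ an element of $B$ that $\vt$ sends to $\pi(x)$, and extend by freeness to a homomorphism $\alpha\colon F\to B$ with $\vt\alpha=\pi$. In particular each $\alpha(u_i)$ lies in $\ker\vt$, since $\vt\alpha(u_i)=\pi(u_i)$ is trivial.

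Now invoke that $B$ is finitely generated, say by $b_1,\dots,b_m$. For each $j$ choose $f_j\in F$ with $\pi(f_j)=\vt(b_j)$; then $\vt(\alpha(f_j))=\vt(b_j)$, so the element $k_j$ --- which is $b_j\,\alpha(f_j)\inv$ in the group case and $b_j-\alpha(f_j)$ in the Lie and ring cases --- lies in $\ker\vt$. Let $M$ be the kernel of $B$ (normal subgroup, respectively ideal) generated by the finite set $\{\alpha(u_1),\dots,\alpha(u_r),k_1,\dots,k_m\}$. Two observations are immediate. First, $M\sube\ker\vt$, since all the listed generators lie in $\ker\vt$ and $\ker\vt$, being a kernel, contains the kernel they generate. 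Second, $\alpha(N)\sube M$: a homomorphism carries the normal closure (respectively the generated ideal) of a set into the normal closure (respectively the generated ideal) of its image, so $\alpha(N)$ lies in the kernel generated by the $\alpha(u_i)$, hence in $M$. Carrying out this second point uniformly across the four settings is the one spot that requires care, though it is routine in each case.

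It remains to prove $\ker\vt\sube M$, and for this I run a short diagram chase. Let $q\colon B\to B/M$ be the quotient map, so that $\ker q=M$. Since $\alpha(N)\sube M$, the composite $q\alpha\colon F\to B/M$ kills $N$ and therefore factors through $\pi$, yielding a homomorphism $\gamma\colon A\to B/M$ with $\gamma\pi=q\alpha$. I claim $\gamma\vt=q$: the two homomorphisms $B\to B/M$ agree on each generator $b_j$, because $\gamma\vt(b_j)=\gamma\pi(f_j)=q\alpha(f_j)=q(b_j)$, where the last equality holds since $k_j\in M$; as the $b_j$ generate $B$, it follows that $\gamma\vt=q$. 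Consequently $\ker\vt\sube\ker(\gamma\vt)=\ker q=M$, and combined with $M\sube\ker\vt$ this gives $\ker\vt=M$, which is finitely generated as a kernel. There is no genuine obstacle here: the substance is the correct choice of the finite set $\{\alpha(u_i)\}\cup\{k_j\}$, after which the factorization $q=\gamma\vt$ does the rest.
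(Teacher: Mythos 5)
Your proof is correct and follows essentially the same route as the paper's: both lift $\pi$ through $\vt$ via a homomorphism $\alpha\colon F\to B$ with $\vt\alpha=\pi$ and take as the finite generating set the images of the relators together with the correction elements $b_j-\alpha(f_j)$ coming from the generators of $B$. The only cosmetic difference is that you verify $\ker\vt\sube M$ by factoring the quotient map $B\to B/M$ through $A$, whereas the paper decomposes an arbitrary element of $\ker\vt$ directly; the substance is identical.
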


\begin{proof}
Since $A$ is finitely presented, there exists a free object $F=\langle x_1,x_2,\ldots, x_n\rangle$, with the $x_i$ as free
generators, and an epimorphism $\pi\colon F\to A$, such that $\ker\pi=J$ is finitely generated as a kernel. Clearly $A$ is
generated by the elements $a_i=\pi(x_i)$. Next, we are given the epimorphism $\vt\colon B\to A$, so we can choose
$b_i\in B$ with $\vt(b_i)=a_i$. Since $F$ is free on the $x_i$'s, we can define a homomorphism $\alpha\colon F\to B$
by $\alpha(x_i)=b_i$. Notice that the composition $\vt\alpha\colon F\to B\to A$ is equal to $\pi$.
In particular, if $f\in F$, then $\alpha(f)\in\ker\vt$ if and only if $f\in\ker\pi$.

Now $B=\langle g_1,g_2,\ldots, g_m\rangle$ is finitely generated, and since $\pi$ is onto, we can choose $f_j\in F$
with $\vt(g_j)=\pi(f_j) =\vt\alpha(f_j)$. Thus $g_j-\alpha(f_j)\in \ker\vt$ for all $j$. Of course, in the context of groups,
addition corresponds to multiplication and subtraction to a version of division. Let $I$ be the kernel in $B$
generated by the finitely many elements $g_j-\alpha(f_j)$. Since each $g_j$ is contained in $I+\Imm(\alpha)$, and
since $I$ is a kernel, we see that $B=I+\Imm(\alpha)$.

Finally, let $u\in\ker\vt$. Then $u\in I+\Imm(\alpha)$, so $u=v+\alpha(f)$ for some $v\in I$ and $f\in F$.
Since $v\in I\sube\ker\vt$, we have $\alpha(f)\in\ker\vt$ and hence $f\in\ker\pi=J$. Thus since $\alpha(J)\sube\ker\vt$,
we see that $\ker\vt=I+\alpha(J)$. Now $I$ is finitely generated as a kernel and $J$ is finitely generated as
a kernel in $F$. Thus $\alpha(J)$ is finitely generated as a kernel in $\alpha(F)\sube B$. We conclude that
$\ker\vt=I+\alpha(J)$ is finitely generated as a kernel in $B$, and the result follows.
\end{proof}

The above efficient proof is based on the one in \cite{RS}. Conversely we have

\begin{lemma}
Let $A$ and $B$ be finitely generated objects of the same type and let $B$ be finitely presented. If $\vt\colon B\to A$
is an epimorphism and if the kernel of $\vt$ is finitely generated as a kernel, then $A$ is finitely presented.
\end{lemma}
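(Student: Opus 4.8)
The plan is to mirror the preceding proof, but to push a finite presentation of $B$ \emph{forward} along $\vt$ rather than to pull one back. Since $B$ is finitely presented, fix a finitely generated free object $F=\la x_1,\ldots,x_n\ra$, with the $x_i$ as free generators, together with an epimorphism $\pi\colon F\to B$ whose kernel $J=\ker\pi$ is finitely generated as a kernel, say by $e_1,\ldots,e_\ell$. The composite $\vf=\vt\pi\colon F\to A$ is then an epimorphism (both $\pi$ and $\vt$ are onto) from a finitely generated free object, so it will suffice to prove that $\ker\vf$ is finitely generated as a kernel.

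First I would record that $\ker\vf=\pi\inv(\ker\vt)$. Since $\ker\vt$ is finitely generated as a kernel in $B$, choose generators $c_1,\ldots,c_k$ for it and lift them to elements $d_1,\ldots,d_k\in F$ with $\pi(d_i)=c_i$. Let $N$ be the kernel in $F$ generated by the finite set $\{e_1,\ldots,e_\ell,d_1,\ldots,d_k\}$; note $J\sube N$. Each $d_i$ lies in $\ker\vf$, because $\pi(d_i)=c_i\in\ker\vt$, and since $\ker\vf$ is a kernel it follows that $N\sube\ker\vf$. For the reverse inclusion, observe that $\pi(N)$ is a kernel in $B=\pi(F)$ containing every $c_i=\pi(d_i)$, hence containing the kernel of $B$ generated by the $c_i$, namely $\ker\vt$. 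So if $f\in\ker\vf$, then $\pi(f)\in\ker\vt\sube\pi(N)$, and we may write $\pi(f)=\pi(w)$ for some $w\in N$. Then $f$ and $w$ differ by an element of $\ker\pi=J\sube N$ --- in the group case $fw\inv\in J$ --- so $f\in N$. Thus $\ker\vf=N$ is finitely generated as a kernel, and $A$ is finitely presented.

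The only point needing genuine care is the uniform treatment of the four categories: the phrases ``the kernel generated by a set'', ``the image of a kernel under a surjection is a kernel'', and ``$f$ and $w$ differ by an element of $J$'' must each be read in the appropriate sense, i.e. normal closure and coset division for groups, ideal generation and subtraction for Lie algebras and rings. Apart from that bookkeeping I expect no real obstacle; in particular, the key observation that $\pi(N)$, being a kernel that contains the images $\pi(d_i)=c_i$, must contain $\ker\vt$, is immediate from the surjectivity of $\pi$.
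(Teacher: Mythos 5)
Your proof is correct and follows essentially the same route as the paper: both pass to the composite epimorphism $\vt\pi\colon F\to A$ and show its kernel is generated, as a kernel, by generators of $\ker\pi$ together with lifts of generators of $\ker\vt$. You have simply written out in detail the step the paper dismisses as ``clear,'' and the details check out in all four categories.
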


\begin{proof}
Since $B$ is finitely presented, there exists a finitely generated free object $F$ and an epimorphism
$\pi\colon F\to B$ so that the kernel of $\pi$ is finitely generated as a kernel. Then the composite map
$\vt \pi\colon F\to B\to A$ is an epimorphism and its kernel $L$ in $F$ satisfies $L\supe \ker \pi$ and
$B = F/\ker \pi \supe L/\ker\pi =\ker \vt$. Since $\ker \vt$ is finitely generated as a kernel in $B$ and $\ker\pi$
is finitely generated as a kernel in $F$, it is clear that $L$ is finitely generated as a kernel in $F$. Hence $A$
is finitely presented.
\end{proof}

As a consequence, we obtain

\begin{lemma}
Let $A$ be a finitely generated object and write $A=B\oplus C$, a direct sum of objects of the same type. Then $B$ and $C$ are finitely generated.
Furthermore, if $A$ is finitely 
presented, then so are $B$ and $C$.
\end{lemma}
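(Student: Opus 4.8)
The plan is to realize $B$ and $C$ as epimorphic images of $A$ and to invoke the previous lemma, which says that an epimorphic image of a finitely presented object, with kernel finitely generated as a kernel, is again finitely presented. First I would introduce the two coordinate projections $\pi_B\colon A\to B$ and $\pi_C\colon A\to C$; these are epimorphisms of objects of the prescribed type. (In the group case $\oplus$ denotes the direct product and $\pi_B,\pi_C$ the usual coordinate projections.) For the first assertion, if $A=\la a_1,\dots,a_n\ra$ then, since $\pi_B$ is onto, $B=\la\pi_B(a_1),\dots,\pi_B(a_n)\ra$ is finitely generated, and likewise $C$ is finitely generated.

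Now assume $A$ is finitely presented; then $A$ is in particular finitely generated, and by the first assertion so is $B$. I would apply the previous lemma to the epimorphism $\pi_B\colon A\to B$, playing the role of $\vt$ there: once we know that $\ker\pi_B$ is finitely generated as a kernel in $A$, it follows that $B$ is finitely presented; and the same argument with $\pi_C$ in place of $\pi_B$ handles $C$. So the whole statement reduces to one claim: the kernel of a coordinate projection of $A=B\oplus C$ is finitely generated as a kernel (a normal subgroup, respectively an ideal) of $A$.

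For that claim I would identify $\ker\pi_B$ with $C$ sitting inside $A$ in the obvious way, and use the first assertion to write $C=\la c_1,\dots,c_k\ra$ as a finitely generated object. In the group and Lie algebra cases, the kernel of $A$ generated by $c_1,\dots,c_k$ is itself a sub-object of $A$ (a subgroup, respectively a Lie subalgebra) containing $c_1,\dots,c_k$, hence it contains the sub-object they generate, namely $C$; on the other hand it is contained in $C$, since $C$ is a kernel of $A$ containing the $c_i$; so it equals $C$. In the ring and $R$-algebra cases this argument fails, since an ideal of a unital ring need not contain the identity of a summand; but here there is an even simpler route, namely $\ker\pi_B = e_C A$ where $e_C=(0,1_C)$ is a central idempotent of $A$, so $\ker\pi_B$ is the principal ideal generated by the single element $e_C$. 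This case check is the only substantive point, and it is routine; with it in hand, the finite presentation of $B$ and of $C$ follows from the previous lemma as indicated. The one thing to be careful about is that this last verification goes through uniformly across all four settings, but in each case it is immediate once one observes that a direct summand is a kernel on which the complementary summand acts trivially.
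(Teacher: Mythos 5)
Your proposal is correct and follows essentially the same route as the paper: project onto each summand, observe that the complementary summand is the kernel and is finitely generated as an object (hence as a kernel --- via the central idempotent $0\oplus 1$ in the ring and $R$-algebra cases, exactly as the paper remarks), and then invoke Lemma~1.2. Your case-by-case verification that a finitely generated summand is finitely generated as a kernel is just a more explicit spelling-out of the step the paper leaves implicit.
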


\begin{proof}
We have an epimorphism $\pi\colon A\to B$ with kernel $C$ and we have an epimorphism $\vt\colon A\to C$.
Since $A$ is finitely generated, the latter implies that $C$ is finitely generated and hence finitely generated as a kernel.
The previous lemma now yields the result.

Note that if $A$ is a $R$-algebra or a ring, then $C$ is an ideal of $A$ and it is singly generated as an ideal by the
central idempotent $e = 0\oplus 1$, that is $C=Ae$.
\end{proof}

Furthermore, we have

\begin{proposition}
Let $A$ be a finitely presented object. Then all homomorphic images of $A$ are
finitely presented if and only if $A$ satisfies the ascending chain condition on kernels.
\end{proposition}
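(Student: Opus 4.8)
The plan is to deduce both implications from the two preceding lemmas, after first recording the routine reformulation of the chain condition.

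\emph{Preliminary step.} I would note that in each of our four settings the union of an ascending chain of kernels is again a kernel, and that consequently an object satisfies the ascending chain condition on kernels if and only if every one of its kernels is finitely generated as a kernel. The nontrivial direction is the usual argument: if a kernel $K$ is \emph{not} finitely generated as a kernel, then beginning from any finitely generated sub-kernel one repeatedly adjoins an element of $K$ lying outside the kernel generated so far, producing a strictly ascending chain of finitely generated sub-kernels of $K$ and contradicting the chain condition. Conversely, if every kernel is finitely generated, then the finitely many generators of the union of any given ascending chain already lie in one term, so that chain stabilizes.

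\emph{ACC $\Rightarrow$ all homomorphic images finitely presented.} Let $\vt\colon A\to B$ be an epimorphism. Since $A$ is finitely generated, so is $B$, and by the preliminary step $\ker\vt$ is finitely generated as a kernel. Then, because $A$ is finitely presented, the second lemma above applies to $\vt\colon A\to B$ (with $A$ playing the role of the finitely presented source and $B$ the target) and shows that $B$ is finitely presented.

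\emph{All homomorphic images finitely presented $\Rightarrow$ ACC.} Let $K_1\sube K_2\sube\cdots$ be an ascending chain of kernels of $A$ and put $K=\bigcup_i K_i$, which is a kernel by the preliminary step. Then $A/K$ is a homomorphic image of $A$, hence finitely presented by hypothesis, and the natural map $A\to A/K$ is an epimorphism from the finitely generated object $A$ onto it. By the first lemma above its kernel $K$ is finitely generated as a kernel; the finitely many generators lie in some $K_n$, so $K\sube K_n\sube K$ and the chain stabilizes at $K_n$.

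I expect the argument to be essentially this clean, so the single point that really requires care — and the only place the ``same type'' hypotheses get used — is the preliminary step: that a directed union of kernels is a kernel, and that this forces the equivalence of the two forms of the chain condition. One should also remember to note explicitly that homomorphic images of the finitely generated object $A$ are again finitely generated, which is precisely what lets the two lemmas be invoked.
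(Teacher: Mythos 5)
Your proof is correct and follows exactly the paper's route: the paper likewise reduces the statement to the observation that the ascending chain condition on kernels is equivalent to every kernel being finitely generated as a kernel, and then invokes Lemmas 1.1 and 1.2 for the two implications. You have merely written out in full the details that the paper compresses into two sentences.
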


\begin{proof}
Of course $A$ satisfies the ascending chain condition on kernels if and only if
all kernels are finitely generated as kernels. Thus Lemmas 1.1 and 1.2 yield the result.
\end{proof}

Recall that an object $A$ is said to be \emph{Hopfian} if every epimorphism $A\to A$ is one-to-one
and hence an isomorphism.  If $A$ satisfies the ascending chain condition on kernels, then it is easy
to see that $A$ is Hopfian. Indeed, if $N$ is maximal with $A/N \cong A$, then $A/N$ can have no proper
homomorphic image isomorphic to $A$, and hence $A/N\cong A$ is Hopfian. As a consequence, we have

\begin{corollary}
If all homomorphic images of $A$ are finitely presented, then $A$ is Hopfian.
\end{corollary}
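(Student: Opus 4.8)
The plan is to feed the hypothesis into Proposition 1.4 and then invoke the fact, already noted in the paragraph preceding the corollary, that the ascending chain condition on kernels forces an object to be Hopfian.

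First I would observe that $A$ is one of its own homomorphic images — take the identity map $A\to A$ — so the hypothesis that \emph{all} homomorphic images of $A$ are finitely presented includes the assertion that $A$ itself is finitely presented. This is precisely what is needed in order to apply Proposition 1.4, which then tells us that $A$ satisfies the ascending chain condition on kernels.

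It remains to deduce that $A$ is Hopfian. One route is the maximality argument already sketched in the text: choose a kernel $N$ of $A$ that is maximal subject to $A/N\cong A$ (such an $N$ exists by the ACC, since $N$ trivial qualifies). Every proper homomorphic image of $A/N$ has the form $A/M$ with $M$ a kernel of $A$ properly containing $N$, and by maximality $A/M\not\cong A\cong A/N$; hence $A/N$ has no proper homomorphic image isomorphic to itself, so $A/N$, and therefore $A$, is Hopfian. Alternatively, and uniformly across all four settings, one can argue directly: given an epimorphism $\vf\colon A\to A$, the chain $\ker\vf\sube\ker\vf^2\sube\cdots$ stabilizes, say $\ker\vf^n=\ker\vf^{n+1}$; then for $x\in\ker\vf$, surjectivity of $\vf^n$ gives $x=\vf^n(y)$ for some $y$, whence $y\in\ker\vf^{n+1}=\ker\vf^n$ and $x$ is trivial, so $\vf$ is injective.

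I do not expect a genuine obstacle: both ingredients are already in hand, and the only point needing attention is the essentially formal remark that $A$ is counted among its own homomorphic images, which is what licenses the appeal to Proposition 1.4. In the group setting one should of course read "the kernel is trivial" and "$x$ is trivial" as "the kernel is $1$" and "$x=1$", but nothing substantive changes.
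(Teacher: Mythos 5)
Your proof is correct and follows the paper's own route: feed the hypothesis (noting that $A$ is its own homomorphic image, hence finitely presented) into Proposition~1.4 to get the ascending chain condition on kernels, then invoke the maximality argument sketched just before the corollary to conclude that $A$ is Hopfian. The alternative direct argument via the stabilizing chain $\ker\vf\sube\ker\vf^2\sube\cdots$ is a fine, equally standard way to finish, but nothing in your write-up departs substantively from the paper.
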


We close this section with three lemmas that offer some converses and analogs to Lemma~1.3 for our specific objects of interest. In all of these situations, we will have
$A=\la B,C\ra$ so that $A$ is somehow generated by $B$ and $C$. If $B$
is finitely generated by the generator set $\beta$ and $C$ is finitely
generated by the generator set $\gamma$, then surely $A$ is finitely
generated by $\beta\cup\gamma$. Furthermore, if $B$ and $C$ are finitely
presented, then $\beta\cup \gamma$ satisfies the finitely many relations
of $\beta$ and of $\gamma$. Furthermore, depending on the particular
structure of $A$, we must adjoin additional relations that are
mentioned in each of the three lemmas. We will use this notation and observation
below. We start with groups.

\begin{lemma}
Let $B$ and $C$ be multiplicative groups.
\begin{enumerate}
\item $B$ and $C$ are finitely generated (respectively, finitely presented)
if and only if their direct product $A=B\times C$ is finitely generated
(respectively, finitely presented).
\item $B$ and $C$ are finitely generated (respectively, finitely presented)
if and only if their free product $A=B* C$ is finitely generated
(respectively, finitely presented).
\end{enumerate}
\end{lemma}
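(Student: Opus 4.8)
The plan is to prove parts (i) and (ii) in parallel, in each case dealing with the ``finitely generated'' equivalence first and the ``finitely presented'' equivalence second, and within the latter separating the implication that manufactures a presentation of $A$ from the one that trims a given presentation of $A$. For the ``finitely generated'' statements, write $B=\la\beta\ra$ and $C=\la\gamma\ra$ with $\beta,\gamma$ finite; in both the direct product and the free product one has $A=\la\beta\cup\gamma\ra$, so $A$ is finitely generated. Conversely, in each case $B$ and $C$ are homomorphic images of $A$: for $A=B\times C$ via the coordinate projections, and for $A=B*C$ via the retractions $A\to B$ (identity on $B$, trivial on $C$) and $A\to C$. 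Since a homomorphic image of a finitely generated group is finitely generated, $A$ finitely generated forces $B$ and $C$ finitely generated.

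For the ``finitely presented'' forward implications I would invoke the observation recorded just before the statement. Fix finite presentations $B=\la\beta\mid\rho\ra$ and $C=\la\gamma\mid\sigma\ra$. For the free product, $A=B*C$ is presented on $\beta\cup\gamma$ by $\rho\cup\sigma$ with no further relations: the map $F(\beta)*F(\gamma)\to B*C$ has kernel the normal closure of the two component kernels, which is exactly the universal property of the free product. For the direct product, $A=B\times C$ is presented on $\beta\cup\gamma$ by $\rho\cup\sigma$ together with the finitely many commutation relations $[b,c]=1$ for $b\in\beta$, $c\in\gamma$; here one must check that, modulo $\rho\cup\sigma$, the normal closure of these particular commutators already contains $[b,c]$ for every $b\in B$ and $c\in C$, which follows from $[b_1b_2,c]=b_1[b_2,c]b_1\inv\mdot[b_1,c]$ and its mirror image once $\beta$ generates $B$ and $\gamma$ generates $C$. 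In either case $A$ is finitely presented.

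For the ``finitely presented'' backward implications: in the direct product case $A=B\times C$ is a direct sum, so the assertion is exactly Lemma~1.3. For the free product case I would eliminate generators. Since $A$ is finitely presented and $\beta\cup\gamma$ is a finite generating set, Lemma~1.1 applied to the epimorphism onto $A$ from the free group on $\beta\cup\gamma$ shows $A$ has a finite presentation $\la\beta\cup\gamma\mid w_1,\dots,w_k\ra$. Then $B\cong A/\la\la\gamma\ra\ra$ is presented by $\la\beta\cup\gamma\mid w_1,\dots,w_k,\gamma\ra$, and the Tietze transformation deleting the now-trivial generators in $\gamma$ --- which simply strikes every letter of $\gamma$ from each $w_i$ --- leaves a finite presentation of $B$ on $\beta$; the argument for $C$ is symmetric. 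Equivalently, the kernel of the induced epimorphism $F(\beta)\to B$ is the image of $\la\la w_1,\dots,w_k\ra\ra$ under the letter-deleting retraction $F(\beta\cup\gamma)\to F(\beta)$, hence is generated as a normal subgroup by those $k$ images, and Lemma~1.2 applies.

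The step I expect to be the main obstacle is this last one, the backward implication for free products. The tempting route is to feed the retraction $A=B*C\to B$ directly into Lemma~1.2, but its kernel is the normal closure of $C$ in $A$, which is in general not finitely generated as a normal subgroup --- for $B=C=\ZZ$ it is free of infinite rank --- so the retraction alone does not suffice and the elimination argument is genuinely needed. A lesser point needing care is the verification, in the direct product case, that the finitely many generator-level commutators normally generate all the commutators $[b,c]$ with $b\in B$, $c\in C$.
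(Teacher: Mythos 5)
Your proof is correct, and except for its final step it coincides with the paper's: the same generating sets, the same commutation relations $[b,c]=1$ for the direct product (with the worthwhile extra check that the generator-level commutators normally generate all of them), the same appeal to Lemma~1.3 for the direct-product converse, and the same retractions for the finitely generated converses. The one place you diverge---the backward finite-presentation implication for $A=B*C$---is also the one place you misdiagnose the difficulty. You reject the route through the retraction $\theta_B\colon B*C\to B$ on the grounds that its kernel, the normal closure of $C$, is free of infinite rank when $B=C=\ZZ$. That is true but beside the point: Lemma~1.2 requires the kernel to be finitely generated \emph{as a kernel}, i.e.\ as a normal subgroup of $A$, not as a subgroup. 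The normal closure of $C$ in $A$ is the smallest normal subgroup containing $\gamma$, hence is normally generated by the finite set $\gamma$ (in your example, by the single element $c$). So the ``tempting route'' is exactly the paper's proof and it works in one line. Your Tietze-elimination substitute is also valid---the letter-deleting retraction $F(\beta\cup\gamma)\to F(\beta)$ does carry the normal closure of $w_1,\dots,w_k$ together with $\gamma$ onto the normal closure of the $k$ stricken words, after which Lemma~1.2 applies to the free group $F(\beta)$---but it costs an extra appeal to Lemma~1.1 and some bookkeeping that become unnecessary once the distinction between finitely generated and finitely \emph{normally} generated is respected.
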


\begin{proof}
(i) We know from Lemma 1.3 that if $A$ is finitely generated or finitely
presented, then so are $B$ and $C$. Conversely if $B$ and $C$ are finitely 
generated, then so is $A=\la B,C\ra$. Furthermore, if $B$ and $C$ are
finitely presented, then we must adjoin to the relations for $\beta$
and $\gamma$, the finitely many relations that assert that each element
of $\beta$ commutes with each element of $\gamma$.

(ii)  If $B$ and $C$ are finitely generated, then so is $A=\la B,C\ra$.
Furthermore, if $B$ and $C$ are
finitely presented, then freeness implies that no additional
relations need be adjoined to the finite number of relations for $\beta$
and for $\gamma$. Thus $A=B*C$ is finitely presented.

Conversely, note that we have epimorphisms
$\theta_B\colon A \to B$ given by $B*C\to B*1\cong B$ and
$\theta_C\colon A\to C$. Thus if $A$ is finitely generated, then so are $B$
and $C$. Furthermore, if $A$ is finitely presented, then since the
kernel of $\theta_B$ is generated by $C$ and hence by the finite set $\gamma$,
we conclude from Lemma~1.2 that $B$ is finitely presented. Similarly,
$C$ is also finitely presented.
\end{proof}

The proof of the next lemma is identical to that of part (i) above.

\begin{lemma}
Let $B$ and $C$ be Lie algebras over the field $K$.
Then $B$ and $C$ are finitely generated (respectively, finitely presented)
if and only if their direct sum $A=B\oplus C$ is finitely generated
(respectively, finitely presented).
\end{lemma}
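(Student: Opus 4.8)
The plan is to follow the proof of Lemma~1.6(i) essentially verbatim. Since $A = B \oplus C$ is a direct sum of objects of the same type, Lemma~1.3 immediately supplies the forward implications: if $A$ is finitely generated, respectively finitely presented, then so are $B$ and $C$. For the converses, let $\beta$ and $\gamma$ be finite generating sets of $B$ and $C$. Then $\beta \cup \gamma$ generates $A = \langle B, C\rangle$, so $A$ is finitely generated. Suppose in addition that $B$ and $C$ are finitely presented. As noted in the discussion preceding Lemma~1.6, the set $\beta \cup \gamma$ satisfies the finitely many relations defining $B$ on $\beta$ together with the finitely many relations defining $C$ on $\gamma$; here the extra relations that must be adjoined are $[b,c] = 0$ for $b \in \beta$ and $c \in \gamma$, expressing that $[B,C] = 0$ in $A = B \oplus C$. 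This is a finite set of new relations, so it only remains to check that the resulting presentation actually defines $A$.

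To do this I would let $F$ be the free Lie algebra on $\beta \cup \gamma$ and $I$ the ideal of $F$ generated by the finitely many relators above, and then show $F/I \cong B \oplus C$. First, the map sending each $b \in \beta$ to $(b,0)$ and each $c \in \gamma$ to $(0,c)$ extends to an epimorphism $F \to A$ that kills $I$, hence induces an epimorphism $\bar\pi\colon F/I \to A$. Next, the homomorphism from $F$ onto the free Lie algebra on $\beta$ that sends $\gamma$ to $0$ carries $I$ into the relator ideal of $B$, yielding a homomorphism $F/I \to B$, and symmetrically one gets $F/I \to C$. Writing $\overline B$ and $\overline C$ for the subalgebras of $F/I$ generated by the images of $\beta$ and of $\gamma$, one checks that these maps restrict to isomorphisms $\overline B \cong B$ and $\overline C \cong C$, with $\overline B$ lying in the kernel of $F/I \to C$ and $\overline C$ lying in the kernel of $F/I \to B$. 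Finally, since $[b,c] \in I$ for generators $b \in \beta$, $c \in \gamma$, an induction on bracket length via the Jacobi identity gives $[\overline B, \overline C] = 0$, so $F/I = \overline B + \overline C$; a short diagram chase with the two coordinate homomorphisms then forces $\bar\pi$ to be injective. Hence $F/I \cong A$ and $A$ is finitely presented.

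The only part that is not purely formal is this last verification, and within it the one place to be careful is the inductive Jacobi-identity argument upgrading $[b,c] = 0$ on the generators to $[\overline B, \overline C] = 0$ on the whole subalgebras; the construction of the maps and the final chase are routine.
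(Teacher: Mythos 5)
Your proposal is correct and follows the same route as the paper, which simply declares the proof ``identical to that of part (i)'' of the group lemma: use Lemma~1.3 for the forward direction and, for the converse, adjoin to the relators of $\beta$ and $\gamma$ the finitely many relations $[b,c]=0$ for $b\in\beta$, $c\in\gamma$. Your added verification that these generator-level commutation relations propagate (via the Jacobi identity) to $[\overline B,\overline C]=0$ and that $F/I\cong B\oplus C$ is exactly the detail the paper leaves implicit, and it is carried out correctly.
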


Finally, we consider associative algebras.

\begin{lemma}
Let $B$ and $C$ be associative $K$-algebras.
\begin{enumerate}
\item $B$ and $C$ are finitely generated (respectively, finitely presented)
if and only if their direct sum $A=B\oplus C$ is finitely generated
(respectively, finitely presented).
\item $B$ and $C$ are finitely generated (respectively, finitely presented)
if and only if their tensor product $A=B\otimes_K C$ is finitely generated
(respectively, finitely presented).
\end{enumerate}
\end{lemma}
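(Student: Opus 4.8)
The plan is to handle part (i) by a variant of the argument for Lemma~1.6(i), and to reduce part (ii) to two elementary facts about tensor products over the field $K$. Throughout part (ii) I assume $B$ and $C$ are nonzero, since otherwise $A=B\otimes_K C=0$ and the displayed equivalence fails for trivial reasons.

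For part (i), the forward implications (both for ``finitely generated'' and for ``finitely presented'') are exactly Lemma~1.3, so only the converses need work. If $B=\la\beta\ra$ and $C=\la\gamma\ra$ are finitely generated, then $A=B\oplus C$ is generated by $\beta\cup\gamma$ together with the central idempotent $e=(0,1_C)$; in contrast with the group and Lie cases this extra generator is genuinely needed, since $e\notin\la\beta\cup\gamma\ra$ in general. If moreover $B$ and $C$ are finitely presented, I would exhibit a finite presentation of $A$ on the generator set $\beta\cup\gamma\cup\{z\}$ by adjoining: the relations expressing that $z$ is a central idempotent which annihilates each element of $\beta$ and fixes each element of $\gamma$; the finitely many relations of $B$, each multiplied on the right by $1-z$; and the finitely many relations of $C$, each multiplied on the right by $z$. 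The verification is bookkeeping: modulo the idempotent relations the free algebra on $\beta\cup\gamma\cup\{z\}$ already splits as the direct sum of the free algebra on $\beta$ and the free algebra on $\gamma$, and the remaining relations then impose exactly the relations of $B$ and of $C$ on the two summands.

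For part (ii) the easy direction goes as follows. If $B=\la\beta\ra$ and $C=\la\gamma\ra$ are finitely generated, then $A=B\otimes_K C$ is generated by $\{b\otimes 1:b\in\beta\}\cup\{1\otimes c:c\in\gamma\}$. If in addition $B=K\la x_1,\dots,x_m\ra/I_B$ and $C=K\la y_1,\dots,y_n\ra/I_C$ are finite presentations, then $A$ is presented on the generators $x_i,y_j$ with defining relations the finitely many generators of $I_B$ written in the $x_i$, the finitely many generators of $I_C$ written in the $y_j$, and the commutators $x_iy_j-y_jx_i$: killing the commutators first yields $K\la x\ra\otimes_K K\la y\ra$, after which the two batches of relations reduce the tensor factors to $B$ and $C$. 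Note that, unlike for the direct sum, no auxiliary idempotent is needed, because the canonical maps $B\to B\otimes_K C$ and $C\to B\otimes_K C$ are unital.

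The hard direction of part (ii) is the forward one. Suppose $A=B\otimes_K C$ is finitely generated, say by $t_1,\dots,t_N$. Expanding each $t_k$ as a finite sum of elementary tensors and letting $B_0\le B$, $C_0\le C$ be the finitely generated subalgebras generated by the $B$-components and the $C$-components that occur, we get $t_k\in B_0\otimes_K C_0$, hence $A\sube B_0\otimes_K C_0\sube B\otimes_K C=A$. It then suffices to observe that if $B_0\le B$ and $C_0\le C$ are subalgebras with $B_0\otimes_K C_0=B\otimes_K C$, then $B_0=B$ and $C_0=C$: choosing $K$-vector-space decompositions $B=B_0\oplus V$ and $C=C_0\oplus W$ and expanding the tensor product into four summands, one sees that $B_0\otimes_K W$ and $V\otimes_K C_0$ must vanish, forcing $V=W=0$ since $B_0,C_0\neq 0$. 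This settles the ``finitely generated'' equivalence. For ``finitely presented'', write $B=F'/I_B$ with $F'=K\la x_1,\dots,x_m\ra$ free (possible, since $B$ is now known to be finitely generated). The algebra $F'\otimes_K C$ is finitely generated and surjects onto the finitely presented algebra $A=B\otimes_K C$ with kernel $I_B\otimes_K C$, so Lemma~1.1 shows $I_B\otimes_K C$ is finitely generated as an ideal of $F'\otimes_K C$. The crux is then a descent statement: if $I$ is an ideal of a $K$-algebra $F$ and $I\otimes_K C$ is finitely generated as an ideal of $F\otimes_K C$, then $I$ is finitely generated as an ideal of $F$. To prove this I would take ideal generators $g_1,\dots,g_M$ of $I\otimes_K C$, expand them in elementary tensors, and let $I_0\le I$ be the ideal of $F$ generated by the finitely many $F$-components that occur; then every $g_s$, hence every element of $I\otimes_K C$, in particular $a\otimes 1$ for each $a\in I$, already lies in $I_0\otimes_K C$, and writing $F=I_0\oplus Q$ as $K$-spaces forces $a\in I_0$. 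Thus $I_B=I_0$ is finitely generated, $B$ is finitely presented, and $C$ is handled symmetrically. I expect this descent statement to be the main obstacle: the obvious device of retracting $F\otimes_K C$ onto $F$ is unavailable because a general nonzero $K$-algebra $C$ admits no algebra homomorphism to $K$, and it is the ``take the $F$-components of the generators'' argument that circumvents this.
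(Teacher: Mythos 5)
Your proof is correct, and while its overall architecture (Lemma~1.3 and Lemma~1.1 for the forward directions, explicit presentations for the converses) matches the paper's, the technical heart of part (ii) — the descent from $B\otimes_K C$ back to $B$ — is done by a genuinely different device. The paper notes that $B$ (respectively the relation ideal $I$ of $B$) has countable $K$-dimension, writes it as an ascending union of finitely generated subalgebras $B_m$ (respectively of finitely generated ideals $I_n$), tensors with $C$, and lets the finite generation of $A$ (respectively of $I\otimes_K C$ as an ideal) force the union to terminate. You instead expand the finitely many generators of $A$ (respectively of the ideal $I_B\otimes_K C$) into elementary tensors, pass to the subalgebra $B_0$ (respectively the subideal $I_0$) generated by their left-hand components, and use a $K$-vector-space complement to conclude $B_0=B$ (respectively $I_0=I_B$). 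The two arguments are close cousins — the paper's termination step silently relies on the same fact that $B_m\otimes_K C=B\otimes_K C$ forces $B_m=B$, which you prove explicitly via the four-summand decomposition — but yours bypasses the countability detour and is, if anything, more self-contained. Two further points in your favor: your caveat that the equivalence in (ii) fails when one factor is the zero algebra, and your care in part (i) with the constant terms of the relations of $B$ and $C$ (multiplying by $1-z$ and $z$ so that the formal units land on the correct idempotents), address genuine loose ends that the paper's terser presentation passes over.
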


\begin{proof}
(i) Again, we know from Lemma 1.3 that if $A=B\oplus C$ is finitely generated or finitely
presented then so are $B$ and $C$. Conversely if $B$ and $C$ are finitely 
generated, then so is $A=\la B,C\ra$. Furthermore, suppose $B$ and $C$ are
finitely presented and let $1_B$ and $1_C$ be their 
corresponding identity elements. Then we adjoin to the finite number of relations for $\beta$
and $\gamma$, the relation $1_B  1_C = 1_C 1_B=0$. In this way,
$1_B$ and $1_C$ become orthogonal idempotents whose sum clearly acts like
the identity on $A$. Thus $A\cong B\oplus C$ is finitely presented.

(ii) If $B$ and $C$ are finitely 
generated, then so is $A=\la B,C\ra$. Furthermore, if $B$ and $C$ are
finitely presented, then we must adjoin to the relations for $\beta$
and $\gamma$, the finitely many relations that assert that each element
of $\beta$ commutes with each element of $\gamma$. In this way,
$A=B\otimes C$ is finitely presented.

Conversely, suppose $A=B\otimes C$ is finitely generated. Then $A$ has
countable dimension over $K$, and hence so does $B$.
The latter implies that we can write $B$ as an ascending union
$B=\bigcup_m B_m$ of finitely generated subalgebras.
Hence $A$ is the ascending union of the various $B_m\otimes C$. But $A$
is finitely generated, so this union must terminate in a finite
number of steps. Thus $B\otimes C=A= B_m\otimes C$ for some $m$
and $B=B_m$ is finitely generated. Similarly, $C$ is a finitely generated
algebra.

Finally, suppose $A = B\otimes C$ is finitely presented. Then $A$ and $B$
are finitely generated, so we can map a finitely generated free algebra
$F$ onto $B$ with kernel $I$. Then $F\otimes C$ maps onto $B\otimes C=A$
with kernel $I\otimes C$, and by Lemma~1.1, $I\otimes C$ is finitely
generated as an ideal.
Now $I$ has countable dimension over $K$,
so we can write $I$ as an ascending union $\bigcup_n I_n$ of ideals $I_n$,
each of which is finitely generated as an ideal. Hence $I\otimes C$ is the
ascending union of the various $I_n\otimes C$. But $I\otimes C$ is
finitely generated as an ideal, so this union necessarily terminates in a finite
number of steps. Thus $I\otimes C=I_n\otimes C$ for some $n$,
and hence $I=I_n$ is finitely generated as an ideal of $F$.
It follows that $B\cong F/I$ is finitely presented, and similarly so is $C$.
\end{proof}

\section{Groups and their Group Rings}

Let $G$ be a multiplicative group and let $R$ be a ring. Then the group ring $R[G]$ is clearly an $R$-algebra
as was defined in the previous section. Notice that if $N\nor G$, then the epimorphism $G\to G/N$ extends to
an epimorphism $\vf_N\colon R[G]\to R[G/N]$. In particular, when $N=G$ we have an epimorphism
$\vf_G\colon R[G]\to R$ sending each group element to 1. This is known as the \emph{augmentation map}
and its kernel is the \emph{augmentation ideal} $\omega(R[G])$. Clearly the latter is the set of group ring
elements with coefficient sum 0 and hence it is the $R$-linear span of the elements $1-g$ with $g\in G$.
More generally, the kernel of $\vf_N$ is easily seen to be the two-sided ideal
$\omega(R[N])\mdot R[G]$ and this is generated by all $1-g$ with $g\in N$.

\begin{lemma}
$N$ is finitely generated as a normal subgroup of $G$ if and only if $\omega(R[N])\mdot R[G]$ is finitely
generated as an ideal of $R[G]$.
\end{lemma}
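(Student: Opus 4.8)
The plan is to prove both directions by exhibiting explicit finite generating sets, moving between generators of the normal subgroup $N$ and generators of the ideal $\omega(R[N])\mdot R[G]$.

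For the forward direction, suppose $N$ is generated as a normal subgroup of $G$ by finitely many elements $n_1,\dots,n_k$. I claim the ideal $\omega(R[N])\mdot R[G]$ is generated as a two-sided ideal of $R[G]$ by the finitely many elements $1-n_1,\dots,1-n_k$. To see this, recall from the discussion above that $\omega(R[N])\mdot R[G]$ is generated as an ideal by all $1-n$ with $n\in N$, so it suffices to show each such $1-n$ lies in the ideal $J$ generated by the $1-n_i$. Every $n\in N$ is a product of $G$-conjugates of the $n_i^{\pm1}$, so it is enough to check that $J$ is closed under the operations used to build such $n$: namely $1-g^{-1}n_ig = -g^{-1}(1-n_i)g + g^{-1}(1-n_i^{-1}n_i\cdot 1)\dots$ — more cleanly, $1-g\inv n_i g = -g\inv(1-n_i)g \in J$, $1 - n_i\inv = -n_i\inv(1-n_i)\in J$, and $1-ab = (1-a) + a(1-b)$ shows that if $1-a,1-b\in J$ then $1-ab\in J$. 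Combining these, every $1-n$ with $n\in N$ lies in $J$, so $\omega(R[N])\mdot R[G] = J$ is finitely generated.

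For the converse, suppose $\omega(R[N])\mdot R[G]$ is generated as an ideal by finitely many elements $\alpha_1,\dots,\alpha_k$. Each $\alpha_j$ lies in $\omega(R[N])\mdot R[G]$, which is the $R$-linear span of $\{(1-n)h : n\in N,\ h\in G\}$, so each $\alpha_j$ involves only finitely many $1-n$'s, and altogether the $\alpha_j$ involve only finitely many elements $n_1,\dots,n_m\in N$. Let $M$ be the normal subgroup of $G$ generated by $n_1,\dots,n_m$; then $M\sube N$ and, by the construction in the forward direction, $\omega(R[M])\mdot R[G]$ is the ideal generated by the $1-n_i$, which contains all the $\alpha_j$, hence $\omega(R[M])\mdot R[G] \supe \omega(R[N])\mdot R[G]$. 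Since $M\sube N$ forces the reverse inclusion, we get $\omega(R[M])\mdot R[G] = \omega(R[N])\mdot R[G]$. Applying the augmentation-type map $\vf_M\colon R[G]\to R[G/M]$, whose kernel is exactly $\omega(R[M])\mdot R[G]$, we find that $1-n$ maps to $0$ for every $n\in N$, so the image of $N$ in $G/M$ is trivial, i.e.\ $N\sube M$. Hence $N=M$ is finitely generated as a normal subgroup.

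The main obstacle is the bookkeeping in the converse: one must argue carefully that an element of $\omega(R[N])\mdot R[G]$, a priori an $R$-combination of products $(1-n)h$, can be rewritten so that only finitely many group elements of $N$ appear, and then that the normal closure $M$ of those finitely many elements already "sees" the whole ideal. The clean way to close this is the observation that $\omega(R[M])\mdot R[G]=\ker\vf_M$ together with the elementary identities for $1-ab$, $1-a\inv$, and $1-g\inv ag$ established in the forward direction; once those are in hand, both inclusions $N\sube M$ and $M\sube N$ are immediate.
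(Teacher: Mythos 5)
Your proof is correct and follows essentially the same route as the paper's: finite normal generators $n_i$ of $N$ give the ideal generators $1-n_i$, and conversely a finite ideal generating set is absorbed into finitely many elements $1-n_i$ with $n_i\in N$, whose normal closure $M$ must then equal $N$. The one local difference is in the forward direction: where you verify $1-n\in J$ for all $n\in N$ by hand via the identities $1-ab=(1-a)+a(1-b)$, $1-a\inv=-a\inv(1-a)$ and $1-g\inv n_i g=g\inv(1-n_i)g$ (note the harmless sign slip in your write-up: there is no minus sign in the last identity), the paper instead observes that $g\mapsto g+J$ defines a group homomorphism from $G$ to the units of $R[G]/J$ whose kernel is a normal subgroup containing the $n_i$, hence containing $N$; your computation is exactly the verification that the set $\lset g\in G\mid 1-g\in J\rset$ is a normal subgroup, so the two arguments are interchangeable. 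In the converse you are actually more explicit than the paper at the final step, invoking $\vf_M$ and $\ker\vf_M=\omega(R[M])\mdot R[G]$ to deduce $N\sube M$ from the equality of the two ideals, a point the paper leaves implicit.
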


\begin{proof}
First suppose $N$ is generated as a normal subgroup by $g_1, g_2,\ldots, g_n\in N$ and let $I$
be the ideal of $R[G]$ generated by the elements $1-g_i$. Then certainly $I\sube \omega(R[N])\mdot R[G]$.
Conversely, we have a group homomorphism $\alpha$ from $G\sube R[G]$ into the group of units of $R[G]/I$
and notice that $g_1,g_2,\ldots, g_n$ are in the kernel of this map. Thus since the kernel of $\alpha$ is a normal subgroup
of $G$, we must have $N\sube\ker\alpha$. Thus $\omega(R[N])\sube I$ and $I=\omega(R[N])\mdot R[G]$
is finitely generated by the elements $1-g_i$.

In the other direction, let $\omega(R[N])\mdot R[G]$ be finitely generated as an ideal. Since this ideal is generated by 
the various $1-g$ with $g\in N$, it follows that it is finitely generated by $1-g_1,1-g_2,\ldots, 1-g_n$ for some $g_i\in N$.
Now let $M$ be the normal subgroup of $G$ generated by $g_1,g_2,\ldots, g_n$. Then $M$ is finitely generated as a normal
subgroup of $G$, $M\sube N$, and each $1-g_i$ is contained in $\omega(R[M])\mdot R[G]\sube \omega(R[N])\mdot R[G]$.
But the elements $1-g_i$ generate the larger ideal, so we must have $\omega(R[M])\mdot R[G]=\omega(R[N])\mdot R[G]$,
and hence $N=M$ is finitely generated as a normal subgroup.
\end{proof}

Now we need one particular example of interest.

\begin{lemma}
Let $G=\langle g_1,g_2,\ldots, g_n\rangle$ be the free group on the $n$ free generators $g_1,g_2,\ldots, g_n$,
and let $R$ be any ring. Then the group ring $R[G]$ is finitely presented as an $R$-algebra.
\end{lemma}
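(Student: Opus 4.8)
The plan is to write down an explicit finite presentation of $R[G]$ and to verify it by universal properties rather than by manipulating group words. Since $G$ is generated by $g_1,\ldots,g_n$ together with their inverses, the algebra $R[G]$ is generated over $R$ by the $2n$ elements $g_1,\ldots,g_n,g_1\inv,\ldots,g_n\inv$, and these satisfy the $2n$ relations $g_ig_i\inv=1$ and $g_i\inv g_i=1$. So I would let $F$ be the free $R$-algebra on free generators $x_1,\ldots,x_n,y_1,\ldots,y_n$, let $I$ be the two-sided ideal of $F$ generated by the $2n$ elements $x_iy_i-1$ and $y_ix_i-1$ for $1\le i\le n$, and let $\pi\colon F\to R[G]$ be the $R$-algebra homomorphism determined by $\pi(x_i)=g_i$ and $\pi(y_i)=g_i\inv$. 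Then $\pi$ is onto, $I\sube\ker\pi$, so $\pi$ induces an epimorphism $\bar\pi\colon F/I\to R[G]$. Since $F$ is finitely generated and $I$ is finitely generated as an ideal, it suffices to prove that $\bar\pi$ is an isomorphism, and for this I would construct a two-sided inverse.

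To build the inverse, I would first note that in $F/I$ the image $\bar x_i$ of $x_i$ is a unit, with inverse the image $\bar y_i$ of $y_i$; this is precisely what the two relations say, and it is here that using both one-sided relations, rather than just $x_iy_i-1$, is essential. Because $G$ is free on $g_1,\ldots,g_n$, there is then a group homomorphism from $G$ into the unit group $U(F/I)$ carrying $g_i$ to $\bar x_i$. Moreover each $\bar x_i$ centralizes the image of $R$ in $F/I$, since already in the free $R$-algebra $F$ the generator $x_i$ commutes with every element of $R$, and this is inherited by the quotient (so, in particular, $F/I$ really is an $R$-algebra in the sense of case (iii), with $R$ embedded via $\bar\pi$). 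By the universal property of the group ring $R[G]$ as an $R$-algebra — an $R$-algebra homomorphism $R[G]\to S$ is the same thing as a group homomorphism $G\to U(S)$ whose image centralizes $R$ — this group homomorphism extends to an $R$-algebra homomorphism $\psi\colon R[G]\to F/I$ with $\psi(g_i)=\bar x_i$.

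It then remains to check that $\bar\pi$ and $\psi$ are mutually inverse, which is routine: both $\bar\pi\psi$ and $\psi\bar\pi$ are $R$-algebra endomorphisms that fix $R$ and agree with the identity on a generating set — the set $\{g_1,\ldots,g_n\}$ for $R[G]$, and the set $\{\bar x_1,\ldots,\bar x_n,\bar y_1,\ldots,\bar y_n\}$ for $F/I$, using $\psi(g_i\inv)=\bar x_i\inv=\bar y_i$ — hence each is the relevant identity map. Therefore $\bar\pi$ is an isomorphism, $\ker\pi=I$ is generated as an ideal by the $2n$ listed elements, $F$ has $2n$ free generators, and $R[G]$ is finitely presented as an $R$-algebra.

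The only real content is the construction of $\psi$, and the two points that make it work are the ones highlighted above: keeping both relations $x_iy_i=1$ and $y_ix_i=1$ so that $\bar x_i$ is genuinely invertible (a one-sided inverse would not suffice in a noncommutative ring), and the fact that the free generators of a free $R$-algebra centralize $R$, so that the homomorphism $G\to U(F/I)$ actually extends over all of $R[G]$. The remaining verifications are purely formal. One could instead argue directly that, modulo $I$, every element of $F$ is an $R$-linear combination of the images of the reduced words in $g_1,\ldots,g_n$, and that these images are $R$-independent in $R[G]$; but the universal-property argument avoids that bookkeeping.
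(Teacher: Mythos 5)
Your proposal is correct and follows essentially the same route as the paper: present $R[G]$ by the free $R$-algebra on $x_1,\ldots,x_n,y_1,\ldots,y_n$ modulo the ideal generated by $x_iy_i-1$ and $y_ix_i-1$, use invertibility of $\bar x_i$ in the quotient and freeness of $G$ to build the inverse homomorphism $R[G]\to F/I$, and conclude that the two maps are mutually inverse so that $I=\ker\pi$. Your added remarks on the universal property of the group ring and on the generators centralizing $R$ merely make explicit what the paper leaves implicit.
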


\begin{proof}
Let $F=R\langle x_1,x_2,\ldots,x_n, y_1, y_2,\ldots,y_n\rangle$ be the free $R$-algebra on the $2n$ free generators
$x_i, y_i$ that commute with $R$. Then there exists an $R$-epimorphism $\pi\colon F\to R[G]$ given by
$\pi(x_i)=g_i$ and $\pi(y_i)=g_i\inv$. The kernel of $\pi$ clearly contains the $2n$ elements $x_iy_i-1$ and
$y_ix_i-1$. Now let $I$ be the ideal of $F$ generated by the various elements $x_iy_i-1$ and $y_ix_i-1$,
and let $\cmpl{\phantom{x}}$ denote the epimorphism $F\to F/I$. Then $\pi$ factors through $\cmpl{\phantom{x}}$ 
so there exists $\cmpl{\pi}\colon \cmpl{F}\to R[G]$ with $\cmpl{\pi}(\cmpl{x}_i)=g_i$ and
$\cmpl{\pi}(\cmpl{y}_i)=g_i\inv$. On the other hand, note that $\cmpl{x}_i$ is invertible in $\cmpl{F}$ with inverse
$\cmpl{y}_i$, so since $G$ is free, there is a homomorphism $\vt$ from $G$ to the units of $\cmpl{F}$
with $\vt(g_i)=\cmpl{x}_i$ and $\vt(g_i\inv)=\cmpl{y}_i$. Of course, $\vt$ extends to an epimorphism
$\vt\colon R[G]\to \cmpl{F}$. Since $\cmpl{\pi}$ and $\vt$ are clearly inverses of each other, we see that both
are one-to-one and hence $I=\ker\pi$. Thus $\ker\pi$ is finitely generated as an ideal, and $R[G]$ is finitely
presented as an $R$-algebra.
\end{proof}

The following can be found in Baumslag \cite{Bm} with a proof that is perhaps a bit too skimpy.

\begin{theorem}
Let $G$ be a multiplicative group and let $R$ be a ring. Then $G$ is finitely presented as a group
if and only if its group ring $R[G]$ is finitely presented as an $R$-algebra.
\end{theorem}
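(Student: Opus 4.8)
The plan is to assemble the tools already at hand, namely Lemma~2.2 (the group ring of a finitely generated free group is finitely presented as an $R$-algebra), Lemma~2.1 (the dictionary between finite generation of a normal subgroup $N$ of $G$ and of the ideal $\omega(R[N])\cdot R[G]$), and Lemmas~1.1 and~1.2. Throughout we may assume $R\neq 0$, since if $R=0$ then $R[G]=0$ is trivially finitely presented while $G$ is unrestricted; so the equivalence is really being asserted for nonzero coefficient rings.

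For the direction ``$G$ finitely presented $\Rightarrow R[G]$ finitely presented as an $R$-algebra'', I would write $G=\Phi/N$ where $\Phi=\langle x_1,\ldots,x_n\rangle$ is a finitely generated free group and $N$ is finitely generated as a normal subgroup of $\Phi$. By Lemma~2.2, $R[\Phi]$ is finitely presented as an $R$-algebra. The epimorphism $\Phi\to G$ induces an $R$-algebra epimorphism $R[\Phi]\to R[G]$ whose kernel, as observed just before Lemma~2.1, equals $\omega(R[N])\cdot R[\Phi]$, and by Lemma~2.1 this ideal is finitely generated as an ideal of $R[\Phi]$. Applying Lemma~1.2 with $B=R[\Phi]$ and $A=R[G]$ then shows that $R[G]$ is finitely presented.

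For the converse, suppose $R[G]$ is finitely presented as an $R$-algebra. The first step is to check that $G$ is finitely generated as a group: letting $H\leq G$ be the subgroup generated by the finitely many group elements occurring in the supports of a finite $R$-algebra generating set of $R[G]$, one sees that $R[H]=R[G]$, and since the elements of $G$ form an $R$-basis of $R[G]$, comparing coefficients forces $G=H$. Now write $G=\Phi/N$ with $\Phi$ free of finite rank and $N=\ker(\Phi\to G)$. The induced map $R[\Phi]\to R[G]$ is an epimorphism of finitely generated $R$-algebras with $R[G]$ finitely presented, so Lemma~1.1 gives that its kernel $\omega(R[N])\cdot R[\Phi]$ is finitely generated as an ideal; Lemma~2.1 then yields that $N$ is finitely generated as a normal subgroup of $\Phi$, i.e.\ that $G$ is finitely presented.

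Both implications are essentially bookkeeping once Lemmas~2.1 and~2.2 are in place. The only step needing an independent, though routine, argument is the reduction in the converse from ``$R[G]$ finitely generated'' to ``$G$ finitely generated'', which rests on the $R$-linear independence of the group elements in $R[G]$; this, together with the need to flag the degenerate case $R=0$, is the only mild obstacle I anticipate.
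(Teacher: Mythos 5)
Your proof is correct and follows essentially the same route as the paper's: both directions combine Lemmas~2.1 and~2.2 with Lemmas~1.1 and~1.2 exactly as you describe. Your only additions are the (reasonable) aside about the degenerate case $R=0$ and a slightly more explicit justification, via linear independence of group elements, that $R[G]$ finitely generated forces $G$ finitely generated.
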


\begin{proof}
Suppose first that $G$ is finitely presented. Then there exists a finitely generated free group $F$
and an epimorphism $\pi\colon F\to G$ such that $N=\ker\pi$ is finitely generated as a normal subgroup of $F$.
By Lemma~2.1, the corresponding epimorphism $\pi\colon R[F]\to R[F/N]\cong R[G]$ has a kernel
that is finitely generated as an ideal. Thus since $R[F]$ is finitely presented by the previous lemma, we conclude from Lemma~1.2
that $R[G]$ is finitely presented.

Conversely suppose $R[G]$ is finitely presented as an $R$-algebra. Then $R[G]$ is finitely generated over $R$ and hence
$G$ is finitely generated by the supports of the finite number of generators of $R[G]$. In particular, there exists
a finitely generated free group $F$ with $G\cong F/N$ for some normal subgroup $N$. Since $R[G]$ is finitely
presented, the kernel of the epimorphism $R[F]\to R[F/N]\cong R[G]$ is finitely generated as an ideal by Lemma 1.1
and hence $N$ is finitely generated as a normal subgroup by Lemma~2.1. Thus $G$ is indeed finitely presented.
\end{proof}

The proof of the next result uses a simplification of a group construction due to
Abels \cite{A}.

\begin{lemma}
Let $G$ be a finitely generated infinite group. Then the wreath product $\mathbb{Z}\wr G$
is a finitely generated group that is not finitely presented.
\end{lemma}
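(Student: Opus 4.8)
The plan is to check finite generation directly and then, assuming for contradiction that $W:=\ZZ\wr G$ is finitely presented, to derive a contradiction from the infinitude of $G$. Write $G=\la g_1,\dots,g_n\ra$, let $B=\bigoplus_{g\in G}\ZZ$ be the base group of the wreath product, and let $a\in B$ generate the copy of $\ZZ$ at the coordinate indexed by $1\in G$. Then the conjugates $a^{g}$ (for $g\in G$) run over the standard free generators of $B$, so $W=\la a,g_1,\dots,g_n\ra$ is finitely generated. (Incidentally, $B$ is the normal closure of $a$ in $W$, so by Lemma~1.2 finite presentability of $W$ would force that of $G$; we shall not need this, but it explains why the relators of $G$ below cause no trouble.)

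Next I would invoke the standard presentation of the wreath product. Fixing a presentation $G=\la g_1,\dots,g_n\mid s_1,s_2,\dots\ra$, one has
\[
W=\la\, a,\,g_1,\dots,g_n \ \mid\ s_1,s_2,\dots,\ \ [a,a^{w}]=1\ \ (w\text{ any word in the }g_i)\,\ra,
\]
where $a^{w}$ is the conjugate of $a$ by $w$; the commutation relators force the conjugates $a^{g}$ to be a free $\ZZ$-basis of $B$ permuted by $G$, whence the presented group is $B\rtimes G=W$. Now suppose $W$ is finitely presented. Applying Lemma~1.1 to the canonical epimorphism onto $W$ from the free group on $a,g_1,\dots,g_n$, its kernel is finitely generated as a normal subgroup; writing each of those finitely many normal generators as a product of conjugates of the displayed relators shows that only finitely many of the relators $[a,a^{w}]=1$ actually occur, say those for $w$ in a finite set $\{w_1,\dots,w_m\}$. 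Hence $W\cong\widetilde W$, where $\widetilde W$ has the displayed presentation but with the commutation relators restricted to $w_1,\dots,w_m$.

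To contradict this I would build a witness group receiving $\widetilde W$ in which a ``missing'' commutator survives. Set $S=\{1\}\cup\{\bar w_1^{\pm1},\dots,\bar w_m^{\pm1}\}\sube G$, a finite symmetric subset; as $G$ is infinite, choose $\bar v\in G$ with $\bar v\notin S$, and let $v$ be a word representing $\bar v$. Let $N$ be the partially commutative (right-angled Artin) group with a generator $t_x$ for each $x\in G$, in which $t_x$ and $t_y$ commute precisely when $x\inv y\in S$; this relation is invariant under left translation of $G$ on itself, so $G$ acts on $N$, and we form $H:=N\rtimes G$. Writing $\alpha:=t_1$, one has $[\alpha,\alpha^{w}]=1$ in $H$ if and only if $\bar w\in S$; in particular the relators $s_j$ and $[\alpha,\alpha^{w_l}]=1$ all hold in $H$, so there is a homomorphism $\widetilde W\to H$ with $a\mapsto\alpha$ and $g_i\mapsto g_i$. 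But $[a,a^{v}]=1$ in $\widetilde W\cong W$, while $[\alpha,\alpha^{v}]\ne1$ in $H$ because $\bar v\notin S$. This contradiction shows that $W$ is not finitely presented.

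The step I expect to be the main obstacle is the reduction to $\widetilde W$ — confirming that finite presentability really does allow one to retain only finitely many of the commutation relators — together with pinning down a concrete enough witness $H$. For the latter, any group carrying a $G$-action and an element whose prescribed conjugates commute while one further conjugate does not would serve equally well; for instance one could replace the right-angled Artin group $N$ by the free nilpotent group of class two on generators indexed by $G$ modulo the prescribed commutators. It is here that a clean choice, in the spirit of the construction of Abels cited above, keeps the argument short.
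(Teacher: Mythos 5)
Your argument is correct, but it takes a genuinely different route from the paper. The paper follows Abels: it realizes a finitely generated group $\mathcal{W}$ inside $3\times 3$ matrices over $\ZZ[G]$ whose intersection with the centre contains all elements $[1,0,0,ab^{*}-ba^{*}]$, shows this central subgroup is infinitely generated, identifies the quotient $\mathcal{W}/(\mathcal{W}\cap\mathcal{Z})$ with $\ZZ\wr G$, and then applies Lemma~1.1 (a central kernel that is finitely generated as a normal subgroup is finitely generated as a group). You instead start from the standard presentation of $\ZZ\wr G$, use the B.~H.~Neumann-style reduction (any finite set of normal generators of the relation subgroup involves only finitely many of the listed relators, so finitely many already suffice), and then kill the presentation with an explicit witness: a right-angled Artin group on generators $t_x$, $x\in G$, with commutation governed by a finite symmetric set $S$, extended by the translation action of $G$. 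This is exactly the strategy the paper itself uses in Step~3 of Theorem~2.5 (Stallings' example), where a homomorphism to a symmetric group shows that the retained relators do not imply a missing one; so your proof unifies Lemma~2.4 with that later argument, at the cost of two standard verifications the paper's route avoids: that the displayed presentation really presents $B\rtimes G$ (your sketch via the abelian normal closure of $a$ is the right one), and that in a RAAG two distinct generators commute only when joined by an edge (retract onto the induced rank-two free subgroup). The paper's construction, by contrast, transfers almost verbatim to the Lie algebra setting of Lemma~3.2, which is presumably why the authors chose it. Both proofs are complete; yours is arguably more elementary in that it never leaves the category of groups and presentations.
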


\begin{proof}
Let $S=\mathbb{Z}[G]$ be the integral group ring of $G$.
We consider the group $\mathcal{G}$ of $3\times 3$ matrices
over $S$ of the form
\[[g,a,b,c]=\begin{bmatrix} 1 & a & c\\ & g & b\\ & & 1 \end{bmatrix} \]
with $g\in G$ and $a,b,c\in S$. It can be shown that this group is finitely generated
with an infinitely generated center. However we are actually concerned with a  more interesting,
somewhat smaller subgroup of $\mathcal{G}$.

To start with, for each $g\in G$, write $\bar g=[g,0,0,0]$ and let $\cmpl G=\lset \bar g\mid g\in G\rset$.
Then the map $G\to \cmpl G$ given by $g\mapsto \bar g$ is clearly an isomorphism, so
$\cmpl G$ is a finitely generated subgroup of $\mathcal{G}$. Next, note that the map
$\mathcal{G}\to G$ given by $[g,a,b,c] \mapsto g$ is an epimorphism with kernel
$\mathcal{H}$, the set of all matrices of the form $[1,a,b,c]$
with  $a,b,c\in S$. Thus $\mathcal{H}$ is a normal subgroup of $\mathcal{G}$ and clearly
$\mathcal{G}= \mathcal{H}\rtimes \cmpl{G}$, the semidirect product of $\mathcal{H}$
by $\cmpl G$. Of course, the set of matrices $\mathcal{Z}\sube \mathcal{G}$
of the form $[1,0,0,c]$
with  $c\in S$ is central in $\mathcal{G}$ and is isomorphic to the additive subgroup of $S$.
Thus the center of $\mathcal{G}$ has an infinitely generated torsion-free subgroup
and hence is not finitely generated.

Now let ${}\us$ be the classical antiautomorphism of $S=\ZZ[G]$ determined by $g\us=g\inv$
for all $g\in G$, and let $\mathcal{H}\us$ be the subset of $\mathcal{H}$ given by all
elements $[1,a,b,c]$ with $b=a\us$. Since we have
\begin{equation} \tag{{\bf mult}}
[1,a,a\us,c] [1,b,b\us,d] =[1,a+b, a\us+b\us, c+d+ab\us]
\end{equation}
it follows easily that $\mathcal{H}\us$ is closed under multiplication and inverses. Hence
$\mathcal{H}\us$ is a subgroup of $\mathcal{H}$. Furthermore, $\mathcal{H}\us\supe \mathcal{Z}$
and $\mathcal{H}\us/\mathcal{Z}$ is isomorphic to the additive subgroup of $S$ via the map
$[1,a,a\us,c]\mapsto a$. Next, for any $g\in G$, we have
\begin{equation}\tag{{\bf conj}}
{\bar g}\inv [1,a,a\us,c] \bar g = [1,ag, g\inv a\us,c] =[1,ag, (ag)\us,c]\end{equation}
so $\cmpl G$ normalizes $\mathcal{H}\us$. Thus $\mathcal{G}\us=
\mathcal{H}\us \cmpl{G}\cong \mathcal{H}\us \rtimes G$ is a
subgroup of $\mathcal{G}$.

Finally, let $\mathcal{W}$ be the finitely generated subgroup of $\mathcal{G}\us$ generated by
$\cmpl{G}$ and $[1,1,1,0]\in\mathcal{H}\us$. Since ${\bar g}\inv [1,1,1,0] g=[1,g,g\us,0]$, it follows
from equation ({\bf mult}) that for all $a\in S$, there exists some $c\in S$, depending on $a$,
with $[1,a,a\us,c]\in\mathcal{W}$.  We can also use equation ({\bf mult}) to compute the commutator
of  two elements of $\mathcal{W}$, say $[1,a,a\us,c]$ and $[1,b,b\us,d]$. Specifically, this
commutator $[1,0,0,r]$ satisfies
\[ [1,a,a\us,c] [1,b,b\us,d]= [1,b, b\us,d][1,a,a\us,c][1,0,0,r]\]
and hence
\begin{align*}
[1,a+b, a\us +b\us, c+d+ab\us]&=[1,a+b,a\us+ b\us, c+d+ba\us][1,0,0,r] \\
&= [1,a+b,a\us+ b\us, c+d+r+ba\us].
\end{align*}
Thus $r= ab\us-b a\us$, and by taking $b=1$, we see that $[1,0,0,a-a\us]\in \mathcal{W}\cap\mathcal{Z}$
for all $a\in S$.

Since $G$ is infinite, it follows that $\mathcal{W}\cap \mathcal{Z}$ is an infinitely generated
central subgroup of $\mathcal{W}$. Thus, since $\mathcal{W}$ is finitely generated, Lemma~1.1
implies that the group $\mathcal{W}/(\mathcal{W}\cap \mathcal{Z})$ is finitely generated, but not
finitely presented. It remains to understand the latter factor group. To this end, note that
$\mathcal{W}\mathcal{Z}=\mathcal{G}\us$, so
\[ \mathcal{W}/(\mathcal{W}\cap \mathcal{Z})\cong \mathcal{G}\us/\mathcal{Z} \cong (\mathcal{H} \us/\mathcal{Z})
\rtimes \cmpl G\cong (\mathcal{H} \us/\mathcal{Z})\rtimes G.\] 
Furthermore, $\mathcal{H} \us/\mathcal{Z}$ is isomorphic to the additive group of $S$, and $G$
acts on $S$ via right multiplication. Thus $\mathcal{W}/(\mathcal{W}\cap \mathcal{Z})\cong S\rtimes G$.
Now, $S$ additively is the free abelian group with $\mathbb{Z}$-basis $G$, and $G$ acts on $S$ by regularly permuting this basis.
By definition, this means that $S\rtimes G$ is isomorphic to the wreath product $\mathbb{Z}\wr G$, and therefore the
lemma is proved.
\end{proof}

For example, in the above lemma, we can take $G=\mathbb{Z}$ to be infinite cyclic. Then 
we conclude that $\ZZ\wr \ZZ$
is finitely generated but not finitely presented. This group is of course the semidirect product
of the free abelian group $A$, with free generators $\lset a_i\mid i\in\ZZ\rset$, by the infinite cyclic group 
$\langle x\rangle$, with $x\inv a_i x= a_{i+1}$ for all $i$.

Next, note that there are numerous examples of finitely presented groups that are not
Hopfian. Most well-known are the Baumslag-Solitar groups
\[ \mathrm{BS}(m,n) =\langle a,b \mid a\inv b^m a = b^n\rangle\]
where the parameters $m$ and $n$ are of course nonzero integers.
Indeed, according to \cite[Theorem 1]{BS}, $\mathrm{BS}(m,n)$ is
Hopfian if and only if $m$ or $n$ divides the other, or
$m$ and $n$ have precisely the same prime divisors.

In particular, the group
\[ G = \mathrm{BS}(2,3) =\langle a,b \mid a\inv b^2 a = b^3\rangle\]
is non-Hopfian. To see this, note that the subgroup of $G$
generated by $a$ and $b^2$ contains $a\inv b^2 a=b^3$ and hence $b$,
so $\langle a, b^2\rangle =\langle a,b\rangle= G$. Furthermore, by squaring both sides of
the relation $a\inv b^2 a = b^3$, we get $a\inv (b^2)^2 a = (b^2)^3$,
and thus there exists an epimorphism $\pi\colon G\to G$
given by $\pi(a)=a$ and $\pi(b)=b^2$. Finally, it follows from work
of Magnus \cite{M} that $a\inv b a$ does not commute with $b$ in $G$.
In particular, the commutator $[a\inv ba,b]$ is not 1. But
\[\pi([a\inv ba,b])= [a\inv b^2a,b^2]= [b^3,b^2]=1,\]
so $[a\inv ba,b]$ is a nonidentity element in $\ker\pi$.

We can of course use the group $ G = \mathrm{BS}(2,3)$ to obtain
a finitely presented, non-Hopfian group ring. Indeed, let $R$ be any ring with 1.
Then by Theorem 2.3, $R[G]$ is a finitely presented group ring.
Furthermore, the group epimorphism $\pi\colon G\to G$ extends to a group ring
epimorphism $\pi'\colon R[G]\to R[G]$. Since $0\neq 1-[a\inv ba,b]$ is in the
kernel of $\pi'$, we conclude that $R[G]$ is non-Hopfian.

A group is said to be \emph{coherent} if every finitely generated subgroup
is finitely presented. Certainly, every free group is coherent.
The following example, due to Stallings in \cite{Sta},
shows that finitely presented groups are not necessarily coherent. We offer a group theoretic proof here rather than the original topological one. For convenience, we write $\mathfrak{fg}\langle x,y\rangle$ for the free group on the two generators.

\begin{theorem}
Let $T=\mathfrak{fg}\langle a,b\rangle \times \mathfrak{fg}\langle c,d\rangle $ be the direct product of the two free groups
and let $S$ be the three generator subgroup of $T$
given by $S=\langle a,bc,d\rangle$.
Then $S\nor T$ with $T/S$ infinite cyclic. Furthermore, $T$ is finitely presented but $S$ is not. In particular, $T$ is not coherent.
\end{theorem}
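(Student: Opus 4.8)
The plan is to identify $S$ with the kernel of a natural surjection $T\to\ZZ$, read off from Lemma~1.4 that $T$ is finitely presented, and then show that $S$ is \emph{not} finitely presented by exhibiting $\ZZ\wr\ZZ$ --- which is not finitely presented by Lemma~2.4 --- as a quotient of $S$ whose kernel is generated by a single element as a normal subgroup.

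First I would define $\vf\colon T\to\ZZ$ by $a\mapsto 0$, $b\mapsto 1$, $c\mapsto -1$, $d\mapsto 0$; this is a well-defined homomorphism because a map from a direct product to an abelian group amounts to a pair of homomorphisms on the factors. It is onto and $a,bc,d\in\ker\vf$, so $S\sube\ker\vf$. Put $t=bc$ and let $\vf_1,\vf_2$ be the restrictions of $\vf$ to the two free factors. A short check shows $\ker\vf=(\ker\vf_1\times\ker\vf_2)\rtimes\la t\ra$, since any element of $\ker\vf$ can be multiplied by a suitable power of $t$ to land in $\ker\vf_1\times\ker\vf_2$. Now $\ker\vf_1$ is the normal closure of $a$ in $\fg\la a,b\ra$, hence a free group with basis $\lset b^{-k}ab^k\mid k\in\ZZ\rset$, and conjugation by $t$ carries $(b^{-k}ab^k,1)$ to $(b^{-k-1}ab^{k+1},1)$; therefore $\ker\vf_1\times1\sube\la a,t\ra\sube S$, and symmetrically $1\times\ker\vf_2\sube\la d,t\ra\sube S$. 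Hence $S=\ker\vf\nor T$ with $T/S\cong\ZZ$. Since a free group on a finite set is finitely presented, Lemma~1.4(i) gives that $T=\fg\la a,b\ra\times\fg\la c,d\ra$ is finitely presented.

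Next, from $S=(\ker\vf_1\times\ker\vf_2)\rtimes\la t\ra$ I would extract a presentation of $S$. Eliminating by Tietze transformations the superfluous generators $t^{-k}at^k$, $t^{-k}dt^k$ ($k\ne 0$) of the two free factors, and observing that the only relations linking these factors assert that the first centralizes the second --- which, after conjugating by powers of $t$, reduce to the family $[t^{-k}at^k,d]=1$ ($k\in\ZZ$) --- one obtains
\[ S=\la t,a,d\mid [t^{-k}at^k,\,d]=1,\ k\in\ZZ\ra. \]
Adjoining the one relation $ad\inv=1$, i.e.\ identifying $a$ and $d$ with a single letter $z$, collapses this to $\la t,z\mid [t^{-k}zt^k,z]=1,\ k\in\ZZ\ra$; the relations force the conjugates $t^{-k}zt^k$ to commute pairwise, so this is the familiar presentation of $\ZZ\wr\ZZ$. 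Thus $S/\la\la ad\inv\ra\ra\cong\ZZ\wr\ZZ$, so $\ZZ\wr\ZZ$ is a homomorphic image of the finitely generated group $S$ by a map whose kernel is the normal closure of the single element $ad\inv$. If $S$ were finitely presented, Lemma~1.2 would then force $\ZZ\wr\ZZ$ to be finitely presented, contradicting Lemma~2.4 (taken with $G=\ZZ$). Hence $S$ is not finitely presented; consequently $T$ is a finitely presented group possessing the finitely generated non-finitely-presented subgroup $S$, so $T$ is not coherent.

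The main obstacle should be the extraction of the presentation of $S$ in the third paragraph --- equivalently, the verification that the kernel of the natural map $S\to\ZZ\wr\ZZ$ is precisely $\la\la ad\inv\ra\ra$. This depends on the semidirect-product description of $S$ and on recognizing $\ker\vf_1$ and $\ker\vf_2$ as free groups on the stated conjugates; once those are in place, the rest is bookkeeping plus direct appeals to Lemmas~1.2, 1.4 and~2.4.
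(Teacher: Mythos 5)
Your argument is correct, and it reaches the non-presentability of $S$ by a genuinely different route than the paper. The first two-thirds coincide in substance with the paper's Steps 1 and 2: the paper also identifies $S$ as a normal subgroup with infinite cyclic quotient and shows that the relation module of $S$ on the generators $a, t=bc, d$ is the normal closure of the commutators $[a^{(t^m)}, d^{(t^n)}]$ (your family $[t^{-k}at^k,d]$ has the same normal closure, since conjugating by $t^n$ recovers the two-parameter family). You get there via the Reidemeister--Schreier description of $\ker\vf_1$, $\ker\vf_2$ and the standard presentation of $(\ker\vf_1\times\ker\vf_2)\rtimes\la t\ra$ followed by Tietze moves, whereas the paper argues directly that the candidate quotient $F/N$ maps isomorphically onto $S$ by comparing a normal subgroup $\cmpl F_0$ with $S_0=\fg\la a,b\ra_a\times\fg\la c,d\ra_d$; the two verifications are equivalent in content and you have correctly flagged this as the crux. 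The real divergence is the last step. The paper shows directly that no finite subfamily of the relations implies the rest: it picks a residue class $k$ bmod $r$ missed by a putative finite generating set and builds a homomorphism of $F$ into a finite symmetric group (sending $a,t,d$ to two transpositions and an $r$-cycle) that kills exactly the relations with $n-m\not\equiv k$, a self-contained combinatorial argument. You instead observe that killing the single extra relator $ad\inv$ collapses $S$ onto $\ZZ\wr\ZZ$, and then invoke Lemma 1.2 together with Lemma 2.4 (with $G=\ZZ$): if $S$ were finitely presented, $\ZZ\wr\ZZ$ would be too. This is shorter and elegantly reuses machinery already established in the paper, at the cost of depending on Lemma 2.4; the paper's version buys an explicit, quantitative witness (a finite quotient) for which relations are independent. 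Both proofs are sound.
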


\begin{proof}
Since free groups are finitely presented, we know that $T$ is
finitely presented. We now 
proceed in a series of three steps.
\end{proof}

\begin{step1} The structure of $S$.
\end{step1}

\begin{proof}
As above, we write $\mathfrak{fg}\langle x,y\rangle $ for the free group on the two generators.
Then we have an epimomorphism $\mathfrak{fg}\langle x,y\rangle  \to \langle y\rangle$
given by $x\mapsto 1$, $y\mapsto y$ and we denote its kernel by 
$\mathfrak{fg}\langle x,y\rangle_x$. This kernel is the normal subgroup of $\mathfrak{fg}\langle x,y\rangle $
generated as a subgroup by all conjugates $x^{(y^n)}$ for $n\in \mathbb{Z}$.
Indeed, the subgroup of $\fg\la x,y\ra$ generated by all 
$y^n$-conjugates of $x$ is clearly normalized by $x$ and $y$, and when one mods
out by this subgroup, only $y$ remains.
Similarly, we write $\mathfrak{fg}\langle x,y\rangle_y$ for the subgroup of 
$\mathfrak{fg}\langle x,y\rangle $ generated by all conjugates $y^{(x^n)}$ with 
$n\in\mathbb{Z}$.  

Now the projection of $S$ onto the first factor of $T$ is given
by $a\mapsto a$, $bc\mapsto b$ and $d\mapsto 1$. Thus
$\langle a,bc\rangle$ is free of rank 2, and similarly $\langle bc,d\rangle$
is also free of rank 2. Since $c$ commutes with $a$ and $b$, we see that
$a^{((bc)^n)} = a^{(b^n)}$ and hence $\mathfrak{fg}\langle a,bc\rangle_a =
\mathfrak{fg}\langle a,b\rangle_a\sube \mathfrak{fg}\langle a,b\rangle$. In other words, the latter two ``sub $a$'' groups are
identical as subgroups of $T$. Note that the first formulation 
shows that the subgroup is in $S$
and is normalized by $a$ and $bc$, while the second formulation 
shows that it is centralized
by $c$ and $d$. Thus this group is normal in $T$. Similarly we have
$\mathfrak{fg}\langle bc,d\rangle_d = \mathfrak{fg}\langle c,d\rangle_d\sube \mathfrak{fg}\langle c,d\rangle$
is normal in $T$.
Set $S_0 = \mathfrak{fg}\langle a,b\rangle_a\times \mathfrak{fg}\langle c,d\rangle_d\sube 
\mathfrak{fg}\langle a,b\rangle\times \mathfrak{fg}\langle c,d\rangle$ so $S_0\sube S$, $S_0\nor T$
and $T/S_0$ is naturally isomorphic to the free abelian group
$\langle b,c\rangle$. Furthermore, $S/S_0$ corresponds to
the subgroup $\langle bc\rangle$, so $S/S_0$ is infinite cyclic. 
In addition, $S\nor T$ and $T/S$ is
infinite cyclic.
\end{proof}

\begin{step2}
The relations of $S$.
\end{step2}

\begin{proof}
Let $F=\mathfrak{fg}\langle x,y,z\rangle$ be the free group on generators $x,y,z$
and consider the epimorphism $\varphi\colon F\to S$ given by
$\varphi(x)=a,\ \varphi(y)=bc$ and $\varphi(z)=d$. We will precisely determine
the kernel of $\varphi$. To this end, let $N$ be the normal subgroup
of $F$ generated, as a normal subgroup, by the commutators
\[ g(m,n)=[ x^{(y^m)}, z^{(y^n)} ] \qquad \mathrm{for\ all}\quad m,n\in\mathbb{Z}.\]
Since the image under $\varphi$ of $x^{(y^m)}$ is contained in
$\mathfrak{fg}\langle a,bc\rangle_a\sube \mathfrak{fg}\langle a,b\rangle$ and the image of
$z^{(y^n)}$ is contained in $\mathfrak{fg}\langle bc,d\rangle_d\sube 
\mathfrak{fg}\langle c,d\rangle$,
it is clear that $N\sube \ker\varphi$. We will prove the equality of these
two normal subgroups by looking closer at
the structure of $F/N$.

Let $\cmpl{\phantom{x}}\colon F\to F/N$ denote the natural epimorphism.
Since $N\sube \ker\varphi$, the map $\varphi$ factors through $F/N$ and there
is an epimorphism $\cmpl{\varphi}\colon \cmpl{F}\to S$ given by
$\cmpl{\varphi}(\cmpl{x}) = a$,  $\cmpl{\varphi}(\cmpl{y}) = bc$ and
$\cmpl{\varphi}(\cmpl{z}) = d$. Note that $\cmpl{\varphi}$ maps $\langle 
\cx,\cy\rangle$ onto $\langle a,bc\rangle$ and the latter group is free
on the two generators. Thus $\langle\cx,\cy\rangle$ is also free and
$\cp\colon \fg\langle \cx,\cy\rangle\to \fg\langle a,bc\rangle$ is an isomorphism. In particular,
$\cp\colon \fg\langle \cx,\cy\rangle_{\cx} \to \fg\langle a,bc\rangle_a$ is also an isomorphism.
Note that $\fg\langle \cx,\cy\rangle_{\cx}$ is normalized by $\cx$ and $\cy$. Furthermore
it is centralized by $\cz$ since $g(m,0)\in N$ for all $m\in\mathbb{Z}$. Thus
$\fg\langle \cx,\cy\rangle_{\cx}$ is normal in $\cmpl F$ and similarly so is
$\fg\langle \cy,\cz\rangle_{\cz}$. Note that the relations $g(m,n)\in N$ imply that
these two normal subgroups commute elementwise. Hence
$\cmpl{F}_0 = \fg\langle \cx,\cy\rangle_{\cx}\cdot \fg\langle \cy,\cz\rangle_{\cz}$ is the direct product
of the two normal subgroups and we conclude easily that
$\cp\colon \cmpl{F}_0\to S_0$ is an isomorphism.

Finally note that $S/S_0$ is the infinite cyclic group generated by the
image of $bc$, and $\cmpl{F}/\cmpl{F}_0$ is cyclic, generated by the image
of $\cy$.  With this, we conclude that $\cp$ is an isomorphism and
hence $N=\ker\varphi$. In other words, $N$ is the normal subgroup of relations
of $S$.
\end{proof}

\begin{step3}
$N$ is not finitely generated as a normal subgroup of $F$
and hence $S$ is not finitely presented.
\end{step3}

\begin{proof}
If $N$ is finitely generated as a normal subgroup, then it is generated as a normal subgroup by finitely many of the elements
$g(m,n)$. Choose an integer $r$ larger than this number of $g(m,n)$ generators. Then we have less than $r$ remainders
$n-m\bmod r$ determined by these generators and so there is at least one remainder, say $k$, that is missing.
In particular, $N$ is generated as a normal subgroup
by all the elements $g(m,n)$ that satisfy $n-m\not\equiv k\bmod r$.
The goal is to show that these relations do not imply the remaining ones,
and for this we need some sort of wreath product structure.

Consider the symmetric group $G$ on $\{ 0,1,\ldots, r-1\}$ and with two additional symbols $*$ and $\bullet$. Let $u$ and $w$ be the two transpositions
$u= (*,k)$ and $w=(\bullet,0)$ and let $v$ be the $r$-cycle
$v=(0,1,\ldots, r-1)$. Since $u^{(v^m)}= (*, m+k \bmod r)$ and
$w^{(v^n)} = (\bullet, n\bmod r)$, and since transpositions commute
if and only if they are identical or disjoint, we see that
$u^{(v^m)}$ commutes with
$w^{(v^n)}$ if and only if $m+k\not\equiv n\bmod r$ or equivalently
$n-m \not\equiv k \bmod r$. In particular, if $\theta$ is the homomorphism
$\theta\colon F\to G$ given by $\theta(x)=u$, $\theta(y)=v$ and
$\theta(z)=w$, then $\ker\theta\nor F$ contains $g(m,n)$
if and only if $n-m\not\equiv k \bmod r$. Thus those $g(m,n)$ in $\ker\theta$
cannot generate all the $g(m,n)$ as a normal subgroup of $F$. With this 
contradiction, we
conclude from Lemma~1.1 that $S$ is not finitely presented.
\end{proof}

\section{Lie Algebras and their Enveloping Algebras}

Let $A$ be an associative $K$-algebra and define the map
$[\ ,\ ]\colon A\times A \to A$ by $[a,b]=ab-ba$ for all $a,b\in A$. Then
it is easy to verify that $[\ ,\ ]$ is bilinear, skew-symmetric and
satisfies the Jacobi identity. Thus, in this way, the elements of $A$
form a Lie algebra which we denote by $\mathfrak{L}(A)$.
Note that if $\sigma\colon A\to B$ is an algebra homomorphism, then the same
map determines a Lie homomorphism $\sigma\colon\mathfrak{L}(A) \to\mathfrak{L}(B)$. Of course, if $L$ is an arbitrary Lie algebra, then there is no
reason to believe that $L$ is equal to some $\mathfrak{L}(A)$. However it
is true that each such $L$ is a Lie subalgebra of some $\mathfrak{L}(A)$
and in some sense, the largest choice of $A$, with $A$ generated by $L$,
is the universal enveloping algebra $U(L)$.

The construction of $U(L)$ starts with its universal definition.
Let $L$ be fixed and consider the set of all pairs $(A,\theta)$,
where $A$ is a $K$-algebra and $\theta\colon L\to \mathfrak{L}(A)$ is a Lie
homomorphism. As usual, if $\sigma\colon A\to B$ is an algebra homomorphism, then the composite map $\sigma\theta\colon L\to \mathfrak{L}(B)$ is
a Lie homomorphism and hence $(B,\sigma\theta)$ is an allowable pair. 
A \emph{universal enveloping algebra} for $L$ is therefore defined to be a pair $(U,\theta)$ such that, for any other pair $(B,\phi)$, there exists a
unique algebra homomorphism $\sigma\colon U\to B$ with $\phi=\sigma\theta$.
It is fairly easy to prove that $(U,\theta)$ exists and that it is unique up to
suitable isomorphism. Unfortunately, the existence proof does not tell us what
$U$ really looks like. In particular, without a good deal of work,
it does not settle the question of whether $\theta\colon L\to \mathfrak{L}
(U)$ is one-to-one. In fact, $\theta$ is one-to-one and this is the important
Poincar\'e-Birkhoff-Witt Theorem (see \cite[\S V.2]{J} for more details).

If $X$ is any set of elements, let $F_X$ be the free $K$-algebra
on the variables $X$ and let $L_X$ be the Lie subalgebra of
$\mathfrak{L}(F_X)$ generated by $X$. Then the various $L_X$'s play the role of the free Lie algebras in this theory, and with this, we can speak about finitely
presented Lie algebras. For example, suppose $\cmpl L$ is a Lie
algebra generated by the set $\cmpl X$ and let $\sigma\colon X\to\cmpl X$
be a one-to-one correspondence of sets. Then $\cmpl X$ generates
$U(\cmpl L)$ as an algebra, and $\sigma$ extends to an epimorphism
$\sigma\colon F_X \to U(\cmpl L)$. The restriction of $\sigma$
then yields a Lie epimorphism of $L_X$ onto $\cmpl L$. It follows
easily that $U(L_X)=F_X$ and thus the following result of \cite{Bm},
the Lie analog of Theorem~2.3,
comes as no surprise.

\begin{theorem}
Let $L$ be a Lie algebra over the field $K$.
Then $L$ is finitely presented as a Lie algebra if and only if
$U(L)$ is finitely presented as an associative $K$-algebra.
\end{theorem}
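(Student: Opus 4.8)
The plan is to mimic the proof of Theorem~2.3, with the auxiliary Lemma~2.1 replaced by its Lie-theoretic counterpart and with the finitely presented algebra $R[F]$ of that argument replaced by the finitely generated free algebra $F_X=U(L_X)$. The key preliminary fact I would isolate is the following analog of Lemma~2.1: if $I$ is a Lie ideal of $L$, and $I\cdot U(L)$ denotes the two-sided ideal of $U(L)$ generated by the subspace $I$, then $I$ is finitely generated as a Lie ideal of $L$ if and only if $I\cdot U(L)$ is finitely generated as an ideal of $U(L)$. Here I would first record that $I\cdot U(L)$ is exactly the kernel of the natural epimorphism $U(L)\to U(L/I)$, so that $U(L)/(I\cdot U(L))\cong U(L/I)$; this is immediate from the universal property of $U$. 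The forward implication then copies Lemma~2.1: if $I$ is generated as a Lie ideal by $t_1,\ldots,t_n$ and $J$ is the ordinary ideal of $U(L)$ they generate, the composite $L\to U(L)\to U(L)/J$ is a Lie homomorphism whose kernel is a Lie ideal containing each $t_i$, hence containing $I$; so $I\subseteq J$ and $I\cdot U(L)=J$ is finitely generated. For the converse, any finite ideal-generating set of $I\cdot U(L)$ is built from finitely many elements $t_1,\ldots,t_n$ of $I$; letting $M$ be the Lie ideal they generate, one gets $M\cdot U(L)=I\cdot U(L)$, and since the restriction to $L$ of $U(L)\to U(L/I)$ is the composite $L\to L/I\hookrightarrow U(L/I)$, which by the Poincar\'e-Birkhoff-Witt theorem has kernel $I$, we obtain $L\cap(I\cdot U(L))=I$ and likewise $L\cap(M\cdot U(L))=M$; hence $M=I$ is finitely generated as a Lie ideal.

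Granting this, the two directions go as follows. Suppose $L$ is finitely presented: choose a finite set $X$, an epimorphism $\rho\colon L_X\to L$, and $I=\ker\rho$ finitely generated as a Lie ideal. Applying $U$ yields an epimorphism $F_X=U(L_X)\to U(L)$ with kernel $I\cdot F_X$, which is finitely generated as an ideal by the preliminary lemma; since $F_X$ (a finitely generated free algebra) is finitely presented, Lemma~1.2 shows $U(L)$ is finitely presented. Conversely, suppose $U(L)$ is finitely presented, hence finitely generated as a $K$-algebra. I would first deduce that $L$ is finitely generated as a Lie algebra: write $L$ as the directed union of its finitely generated Lie subalgebras $L_\alpha$; since $U$ commutes with directed unions, $U(L)$ is the directed union of the $U(L_\alpha)$, and as $U(L)$ is a finitely generated algebra this union stabilizes, so $U(L)=U(L_\alpha)$ for some $\alpha$. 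By PBW, $U(L)$ is free as a left $U(L_\alpha)$-module on the ordered monomials in a basis of $L$ extending a basis of $L_\alpha$, so the equality $U(L)=U(L_\alpha)$ forces that module basis to be $\{1\}$, i.e.\ $L=L_\alpha$, whence $L$ is finitely generated. Now choosing a finite $X$ and an epimorphism $\rho\colon L_X\to L$ with kernel $I$, applying $U$ gives an epimorphism $F_X\to U(L)$ from a finitely presented algebra, so Lemma~1.1 makes $\ker=I\cdot F_X$ finitely generated as an ideal, the preliminary lemma makes $I$ finitely generated as a Lie ideal, and $L=L_X/I$ is finitely presented.

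The routine points — that $U$ preserves epimorphisms and directed unions, that $U(L_X)=F_X$ (noted in the text above), and that a finitely generated free algebra is finitely presented — are either already available or immediate. The main obstacle, and the place that deserves care, is the preliminary lemma, and within it the converse direction, which requires extracting finitely many Lie-ideal generators from an ideal-generating set of $I\cdot U(L)$ and then verifying $M=I$. The clean way to close that gap is the identity $L\cap(I\cdot U(L))=I$, and this is where the single genuinely non-formal ingredient enters: unlike the group case, where $G$ sits visibly inside $R[G]$, the embedding $L\hookrightarrow U(L)$ (and, more, the freeness of $U(L)$ over $U(L_\alpha)$) is precisely what the Poincar\'e-Birkhoff-Witt theorem supplies, and I would flag explicitly that the argument rests on it.
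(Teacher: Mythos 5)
Your proposal is correct and is exactly the argument the paper intends: the paper omits the proof of this theorem (citing Baumslag) precisely because, once one knows $U(L_X)=F_X$, the proof of Theorem~2.3 transfers verbatim via the Lie analog of Lemma~2.1 that you state and prove, with PBW supplying the needed identity $L\cap(I\cdot U(L))=I$. Your extra care on the converse (extracting finite generation of $L$ from finite generation of $U(L)$ via PBW freeness) fills in a step the group-ring case handles by looking at supports, and it is handled correctly.
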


Next, we consider the Lie algebra variant of Lemma 2.4 using similar but somewhat easier arguments.

\begin{lemma}
Let $K$ be a field, let $L$ be a finite-dimensional $K$-Lie algebra, and let $U=U(L)$ be its enveloping
algebra. Suppose that either $\ch K\neq 2$ and $L\neq 0$ or $\ch K=2$ and $L$ is not commutative.
Then the Lie algebra $U\rtimes L$ is finitely generated but not finitely presented. Here
$U$ is viewed as a commutative Lie algebra and the ad action of $\ell\in L$ on $U$ is given by
left multiplication.
\end{lemma}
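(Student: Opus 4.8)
The plan is to imitate the $3\times3$ matrix construction from the proof of Lemma~2.4, working over the associative algebra $U=U(L)$ instead of a group ring and using, in place of the classical antiautomorphism of $\ZZ[G]$, the principal antiautomorphism ${}\us$ of $U$ --- the antiautomorphism with $\ell\us=-\ell$ for all $\ell\in L$, which exists because $\ell\mapsto-\ell$ is a Lie homomorphism from $L$ into the opposite algebra of $U$. Inside $M_3(U)$, write $[\ell,a,c]$ for the matrix whose $(2,2)$-entry is $\ell\in L$, whose superdiagonal $(1,2)$- and $(2,3)$-entries are $a\us$ and $a$ respectively ($a\in U$), whose $(1,3)$-entry is $c\in U$, and whose remaining entries are $0$. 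A short computation of $MM'-M'M$ gives
\[ \bigl[\,[\ell,a,c],\,[\ell',b,c']\,\bigr]=\bigl[\,[\ell,\ell'],\ \ell b-\ell'a,\ a\us b-b\us a\,\bigr],\]
the point being that the $(1,2)$-entry of the commutator, namely $a\us\ell'-b\us\ell$, equals $(\ell b-\ell'a)\us$, so the set $\mathcal{G}$ of all such matrices is a Lie subalgebra of $M_3(U)$. The matrices $[0,0,c]$ form a central ideal $\mathcal{Z}$ of $\mathcal{G}$, isomorphic as a commutative Lie algebra to the additive space of $U$, and $[\ell,a,c]\mapsto(a,\ell)$ identifies $\mathcal{G}/\mathcal{Z}$ with $U\rtimes L$: read modulo $\mathcal{Z}$, the bracket formula above is exactly the semidirect-product bracket, with $L$ acting on $U$ by left multiplication.

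Next I would let $\mathcal{W}\sube\mathcal{G}$ be the Lie subalgebra generated by $[0,1,0]$ together with the finite-dimensional subspace $\{[\ell,0,0]:\ell\in L\}$; this is finitely generated. Since $\bigl[[\ell,0,0],[0,u,0]\bigr]=[0,\ell u,0]$, an induction starting from $[0,1,0]$, together with the fact that $\mathcal{W}$ is a $K$-subspace, shows $[0,u,0]\in\mathcal{W}$ for every $u\in U$ (as $1$ and the products of elements of $L$ span $U$). Hence $\bigl[[0,u,0],[0,v,0]\bigr]=[0,0,u\us v-v\us u]\in\mathcal{W}\cap\mathcal{Z}$ for all $u,v\in U$; and since $\mathcal{W}$ contains every $[\ell,0,0]$ and every $[0,u,0]$, also $\mathcal{W}+\mathcal{Z}=\mathcal{G}$, so $\mathcal{W}/(\mathcal{W}\cap\mathcal{Z})\cong\mathcal{G}/\mathcal{Z}\cong U\rtimes L$.

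The crux, and where the hypothesis enters, is to show $\mathcal{W}\cap\mathcal{Z}$ is infinite-dimensional. If $\ch K\neq2$, pick $0\neq\ell\in L$; then for odd $m$ one has $(\ell^{m})\us-\ell^{m}=(-\ell)^{m}-\ell^{m}=-2\ell^{m}$, so (identifying $\mathcal{Z}$ with $U$) the nonzero elements $\ell,\ell^{3},\ell^{5},\dots$ --- linearly independent in $U$ by the Poincar\'e--Birkhoff--Witt theorem --- all lie in $\mathcal{W}\cap\mathcal{Z}$. If $\ch K=2$, then $L$ is not commutative, so fix $e_1,e_2\in L$ with $c:=[e_1,e_2]\neq0$; here $(e_1^{m})\us e_2-e_2\us e_1^{m}=[e_1^{m},e_2]=\sum_{j=0}^{m-1}e_1^{j}\,c\,e_1^{m-1-j}$, and for odd $m$ the image of this element in the associated graded algebra $\operatorname{gr}U\cong S(L)$ is $m\,\bar c\,\bar e_1^{\,m-1}=\bar c\,\bar e_1^{\,m-1}\neq0$; since these images lie in pairwise distinct degrees, the elements $[e_1^{m},e_2]$ (which lie in $\mathcal{W}\cap\mathcal{Z}$) are linearly independent. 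Either way $\mathcal{W}\cap\mathcal{Z}$ is infinite-dimensional. When $\ch K=2$ and $L$ is commutative the construction genuinely fails, because then ${}\us$ is the identity and $u\us v-v\us u$ vanishes identically --- consistent with the hypothesis; getting this case analysis right is the step I expect to be the main obstacle.

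To finish, $\mathcal{W}\cap\mathcal{Z}$ is a central ideal of the finitely generated Lie algebra $\mathcal{W}$, and a central ideal is finitely generated as an ideal precisely when it is finite-dimensional (the ideal generated by a subset of the centre is merely its $K$-linear span), so $\mathcal{W}\cap\mathcal{Z}$ is not finitely generated as an ideal. Since $\mathcal{W}/(\mathcal{W}\cap\mathcal{Z})$ is finitely generated, Lemma~1.1 shows it is not finitely presented; and it is isomorphic to $U\rtimes L$. Finally $U\rtimes L$ is finitely generated, being generated by $1$ together with a basis of $L$: bracketing a basis element $\ell$ of the $L$-summand against $1$ produces $\ell$ inside the $U$-summand, and iterating produces all products of elements of $L$, which span $U$. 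Therefore $U\rtimes L$ is finitely generated but not finitely presented.
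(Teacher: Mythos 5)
Your proposal is correct and follows essentially the same route as the paper: the $3\times 3$ matrix construction over $U$ using the antipode, the finitely generated subalgebra $\mathcal{W}$ generated by $L$ and $[0,1,0]$, the infinite-dimensional central intersection $\mathcal{W}\cap\mathcal{Z}$ (split into the same two characteristic cases), and the appeal to Lemma~1.1. Your treatment of the characteristic $2$ case via the associated graded algebra is in fact slightly more careful than the paper's, which asserts $a\ell^n-\ell^n a=\ell^{n-1}m$ only up to lower filtration terms.
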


\begin{proof}
We consider the ring $S=U_3$ of $3\times 3$ matrices over $U$ and we write $\alpha *\beta=\alpha\beta -
\beta\alpha$ for all $\alpha,\beta\in S$. Of course, $S$ becomes a Lie algebra under $*$ and, as is to be expected, we
consider a certain Lie subalgebra. Recall that $U$ has an antipode $\sigma$, so that $\sigma$
is an antiautomorphism that maps each element of $L$ to its negative. Now for any $a,b\in U$ and
$\ell \in L$ write
\[ [\ell, a, b] = \begin{bmatrix} 0 &a^\sigma& b\\ & \ell & a\\ & & 0 \end{bmatrix}\in S\]
and $\cmpl\ell =[\ell,0,0]$.
Notice that $\cmpl L=\lset \cmpl\ell\mid \ell\in L\rset$ is a Lie algebra under $*$, naturally isomorphic to $L$,
and that $\cmpl L$ normalizes the set $\mathcal{L} =\lset [0,a,b]\mid a,b\in U\rset$ since
\begin{equation} \tag{{\bf ad}}
[\ell,0,0] * [0,a,b] =\begin{bmatrix} 0& -a^\sigma \ell& 0\\ & 0& \ell a\\ && 0\end{bmatrix}
\end{equation}
and $(\ell a)^\sigma = a^\sigma \ell^\sigma = -a^\sigma \ell$.

Next, we see that $\mathcal{L}$ is a Lie subalgebra of $S$ since
\begin{equation} \tag{{\bf comm}}
[0,a,b]*[0,c,d]=[0,0, a^\sigma c-c^\sigma a]\in\mathcal{L}.
\end{equation}
Indeed, the latter element is in $\mathcal{Z}\sube\mathcal{L}$ where $\mathcal{Z}=\lset [0,0,e]\mid e\in U\rset$
is central in $\mathcal{L}+\cmpl L=  \mathcal{L}\rtimes \cmpl L$.

Finally, let $\mathcal{W}$ be the finitely generated Lie subalgebra of $\mathcal{L}\rtimes \cmpl L$ generated by
$\cmpl L$ and $[0,1,0]\in\mathcal{L}$. Since $U(L)$ is generated by $L$, as a $K$-algebra, it follows from equation ({\bf ad}) that 
$\mathcal{W}$ contains all $[0,a,0]$ with $a\in U$. Hence, by ({\bf comm}), $\mathcal{W}\cap \mathcal{Z}$
contains all $[0,0,a^\sigma c-c^\sigma a]$ with $a,c\in U$.  Suppose first that $\ch k\neq 2$ and $L\neq 0$, and
choose $0\neq \ell\in L$.
Then, by taking $a=1$ and $c$ any odd power of $\ell$,
we see that $\mathcal{W}\cap \mathcal{Z}$ contains
all $[0,0,d]$ with $d$ an odd power of $\ell$.  On the other hand, if $\ch K=2$, then $\sigma$ fixes all elements of $L$
and hence all powers of elements of $L$. In particular, by taking $a,\ell\in L$ that do not commute and $c=\ell^n$
for any odd integer $n$, we see that $d=a^\sigma c-c^\sigma a= a\ell^n-\ell^n a= \ell^{n-1} m$, where $m\in L$ is the nonzero
Lie product of $a$ and $\ell$.

Hence, in both cases, it follows that $\mathcal{W}\cap \mathcal{Z}$ is a non-finitely generated central ideal
of $\mathcal{W}$ and, since $\mathcal{W}$ is finitely generated, Lemma~1.1 implies that $\mathcal{W}/
(\mathcal{W}\cap\mathcal{Z})$ is a finitely generated but not finitely presented Lie algebra.
A close look at equations ({\bf ad}) and ({\bf comm}) show that $\mathcal{W}/
(\mathcal{W}\cap\mathcal{Z})$  is isomorphic to $U\rtimes L$ where $U$ is viewed as an abelian Lie algebra and $L$
acts on $U$ via left multiplication.
\end{proof}

The special case where $L=Kx$ is 1-dimensional is of interest. Here $U(L) =K[x]$ is the polynomial ring in $x$
so $U$ has $K$-basis $a_i=x^i$ for $i=0,1,2,...$. Then $x$ acts on $U$ via the derivation $d(a_i) =x a_i= a_{i+1}$.
This is an example due to Bahturin \cite{Bh}, proved using a result of  Bryant and Groves \cite{BG}.

Let $K$ be a field. A $K$-Lie algebra
is said to be \emph{coherent} if every finitely generated sub-Lie algebra
is finitely presented. 
The following example, due to Roos in \cite[page 461]{Ro},
shows that finitely presented Lie algebras are not necessarily coherent. We offer an elementary proof here 
analogous to that of Theorem 2.5 and different from the original
homological argument. 
For convenience, we write $\mathfrak{fl}\langle x,y\rangle$ for the free $K$-Lie algebra on the two generators.

\begin{theorem}
Let $T=\mathfrak{fl}\langle a,b\rangle \oplus \mathfrak{fl}\langle c,d\rangle $ be the direct sum of the two free $K$-Lie algebras
and let $R$ be the three generator sub-$K$-Lie algebra of $T$ that is
given by $R=\langle a,b+c,d\rangle$.
Then $R\nor T$ with $T/R$ one dimensional. Furthermore, $T$ is finitely presented, but $R$ is not. In particular, $T$ is not coherent.
\end{theorem}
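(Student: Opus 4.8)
The plan is to follow the three-step pattern of the proof of Theorem~2.5, the only step needing a genuinely new idea being the last.

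\emph{Step 1 (the structure of $R$).} Project $T$ onto its two free summands. The projection onto $\fl\la a,b\ra$ kills $\fl\la c,d\ra$ and sends $a\mapsto a$, $b+c\mapsto b$, $d\mapsto 0$; it maps $\la a,b+c\ra$ onto the rank-two free algebra $\fl\la a,b\ra$, so $\la a,b+c\ra$, being $2$-generated, is itself free of rank $2$, and likewise $\la b+c,d\ra$ is free of rank $2$. Since $c$ commutes with $a$ and $b$ in $T$, one has $(\ad(b+c))^n a=(\ad b)^n a$, so the ideal generated by $a$ in $\la a,b+c\ra$ coincides, inside $T$, with the ideal $\fl\la a,b\ra_a$ generated by $a$ in $\fl\la a,b\ra$; the first description shows this ideal lies in $R$ and is normalized by $a$ and $b+c$, the second that it is centralized by $c$ and $d$, so it is an ideal of $T$. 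Similarly $\fl\la c,d\ra_d\nor T$. Putting $R_0=\fl\la a,b\ra_a\oplus\fl\la c,d\ra_d$ one checks $R_0\nor T$, $R_0\subseteq R$, $R=R_0\oplus K(b+c)$, and $T/R_0\cong Kb\oplus Kc$ is abelian; hence $R\nor T$ with $T/R$ one dimensional, and $R/R_0$ is one dimensional, spanned by the image of $b+c$.

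\emph{Step 2 (the relations of $R$).} Let $F=\fl\la x,y,z\ra$, let $\varphi\colon F\to R$ send $x\mapsto a$, $y\mapsto b+c$, $z\mapsto d$, and let $N$ be the ideal of $F$ generated by the elements $g(m,n)=[(\ad y)^m x,(\ad y)^n z]$, $m,n\ge0$. Since $\varphi(g(m,n))=[(\ad b)^m a,(\ad c)^n d]=0$ (the two brackets lie in different summands of $T$), $N\subseteq\ker\varphi$, and the claim is equality. To prove it, pass to $\cmpl{F}=F/N$ and argue as in Steps~1--2 of Theorem~2.5: $\cp$ carries $\la\cx,\cy\ra$ onto the rank-two free algebra $\la a,b+c\ra$, so $\la\cx,\cy\ra$ is free of rank $2$ and $\cp$ identifies $\fl\la\cx,\cy\ra_{\cx}$ with $\fl\la a,b\ra_a$; the relations $g(m,0)\in N$ make $\cz$ centralize $\fl\la\cx,\cy\ra_{\cx}$, hence it is an ideal of $\cmpl{F}$, and symmetrically for $\fl\la\cy,\cz\ra_{\cz}$; the relations $g(m,n)\in N$ force these two ideals to commute, so their sum $\cmpl{F}_0$ is direct and $\cp$ maps it isomorphically onto $R_0$. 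As $\cmpl{F}/\cmpl{F}_0$ is generated by the image of $\cy$ alone (since $\cx,\cz\in\cmpl{F}_0$) it is at most one dimensional, and it surjects onto the one-dimensional $R/R_0$, so $\cp$ is an isomorphism and $N=\ker\varphi$. The only Lie-specific ingredient is the elementary fact that the ideal generated by $x$ in a free Lie algebra equals the subalgebra generated by the iterates $(\ad y)^n x$, $n\ge0$.

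\emph{Step 3 (the main obstacle: $N$ is not finitely generated as an ideal).} Here the device of Theorem~2.5 cannot be transplanted: since $\ad y$ is a derivation, the Jacobi identity gives $(\ad y)\,g(m,n)=g(m+1,n)+g(m,n+1)$, and one checks that killing the ``off-diagonal'' $g(m,n)$ forces all of them to vanish, so no finite-dimensional Lie algebra can play the role of the symmetric group. Instead I would exploit the multigrading of $F=\fl\la x,y,z\ra$ by $(\deg_x,\deg_y,\deg_z)$. The ideal $N$ is homogeneous and its component in multidegree $(\alpha,\beta,\gamma)$ vanishes unless $\alpha\ge1$ and $\gamma\ge1$; moreover its $(1,k,1)$-component is spanned by the vectors $(\ad y)^{k-m-n}g(m,n)$, because in any iterated bracket $[\xi_1,[\cdots,[\xi_r,g(m,n)]\cdots]]$ of multidegree $(1,k,1)$ each $\xi_j$ has $x$- and $z$-degree $0$, i.e.\ $\xi_j\in Ky$. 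Consequently, were $N$ finitely generated as an ideal, only the finitely many homogeneous generators of multidegree $(1,\ast,1)$ could contribute to $N\cap F_{(1,k,1)}$, each via a single vector $(\ad y)^{\ast}h_i$, so $\dim\big(N\cap F_{(1,k,1)}\big)$ would remain bounded as $k\to\infty$. But $N\cap F_{(1,k,1)}$ contains $g(m,k-m)$ for every $0\le m\le k$, and these $k+1$ elements are linearly independent: under the Poincar\'e--Birkhoff--Witt embedding $F\hookrightarrow K\la x,y,z\ra$ and the lexicographic order $y<x<z$, the smallest monomial occurring in $g(m,k-m)$ is $y^{k-m}z\,y^{m}x$ when $m<k/2$ and $y^{m}x\,y^{k-m}z$ when $m\ge k/2$, and these $k+1$ words are pairwise distinct. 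Hence $\dim(N\cap F_{(1,k,1)})\ge k+1\to\infty$, a contradiction, so $N$ is not finitely generated; applying Lemma~1.1 to $\varphi\colon F\to R$ shows $R$ is not finitely presented. Since $R$ is a finitely generated (indeed $3$-generated) subalgebra of $T$ that is not finitely presented, while $T$ is finitely presented by Lemma~1.7 applied to its two free summands, $T$ is not coherent. I expect Step~3 to be the real difficulty — recognizing that the group-theoretic trick collapses and that the grading argument replaces it, together with the verification (routine but not entirely trivial, via initial monomials in the free associative algebra) that $g(0,k),\dots,g(k,0)$ are linearly independent; Steps~1 and~2 are faithful Lie versions of the corresponding parts of the proof of Theorem~2.5.
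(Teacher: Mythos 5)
Your Steps 1 and 2 are faithful Lie translations of the paper's argument and agree with it essentially verbatim. Step 3, however, takes a genuinely different route, and one remark there is off the mark: you assert that because $(\ad y)g(m,n)=g(m+1,n)+g(m,n+1)$ ``no finite-dimensional Lie algebra can play the role of the symmetric group.'' What actually collapses is only the residue-class selection $n-m\not\equiv k\bmod r$; the paper does use a finite-dimensional representation, namely $\theta\colon F\to\mathcal{L}(M)$ with $M$ the $(s+3)\times(s+3)$ matrices, $x\mapsto e(*,0)$, $y\mapsto\sum_i e(i,i+1)$, $z\mapsto e(s,\bullet)$, after replacing the residue criterion by the grading-compatible one ``$m+n<s$'': with $s$ exceeding all $m+n$ among the putative generators, $\ker\theta$ contains every $h(m,n)$ with $m+n\neq s$ but not $h(0,s)$. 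Your alternative is the multigrading argument: $N$ is a homogeneous ideal, its $(1,k,1)$-component is spanned by the $(\ad y)^{k-m-n}g(m,n)$ because any bracketing factor of multidegree $(0,*,0)$ lies in $Ky$, so a finite homogeneous generating set would bound $\dim\bigl(N\cap F_{(1,k,1)}\bigr)$, whereas the $k+1$ elements $g(m,k-m)$ are linearly independent by the distinct minimal words $y^{k-m}zy^mx$ ($m<k/2$) and $y^mxy^{k-m}z$ ($m\ge k/2$), each occurring with coefficient $\pm1$ in every characteristic. I checked this computation and the dimension count; both are correct, so your proof is valid. The trade-off: the paper's representation-theoretic step is shorter and runs exactly parallel to the group case, sidestepping any independence verification, while your grading argument is more structural, quantifies the failure ($\dim$ grows linearly in $k$), and requires the routine but nontrivial PBW/leading-word check.
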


\begin{proof}
Since finitely generated free Lie algebras are finitely presented, 
we know that $T$ is
finitely presented. We now 
proceed in a series of three steps.
\end{proof}

\begin{step1} The structure of $R$.
\end{step1}

\begin{proof}
As above, we write $\mathfrak{fl}\langle x,y\rangle $ for the free 
$K$-Lie algebra on the two generators.
Then we have an epimomorphism $\mathfrak{fl}\langle x,y\rangle  \to Ky$
given by $x\mapsto 0$, $y\mapsto y$ and we denote its kernel by 
$\mathfrak{fl}\langle x,y\rangle_x$. This kernel is the ideal of $\mathfrak{fl}\langle x,y\rangle $
generated as a Lie algebra by all elements $x\mdot (\ad y)^n$ for $n\in \mathbb{Z}^+$, the nonnegative integers.
Indeed, the sub-Lie algebra of $\fl\la x,y\ra$ generated by all these 
$x\mdot (\ad y)^n$ is clearly normalized by $x$ and $y$, and when one mods
out by this sub-Lie algebra, only $y$ remains.
Similarly, we write $\mathfrak{fl}\langle x,y\rangle_y$ for the sub-Lie algebra of 
$\mathfrak{fl}\langle x,y\rangle $ generated by all the elements $y\mdot (\ad x)^n$ with 
$n\in\mathbb{Z}^+$.  

Now the projection of $R$ onto the first factor of $T$ is given
by $a\mapsto a$, $b+c\mapsto b$ and $d\mapsto 0$. Thus
$\langle a,b+c\rangle$ is free of rank 2, and similarly $\langle b+c,d\rangle$
is also free of rank 2. Since $c$ commutes with $a$ and $b$, we see,
by induction on $n$, that
$a\mdot{(\ad (b+c))^n} = a\mdot{(\ad b)^n}$ and hence $\mathfrak{fl}\langle a,b+c\rangle_a =
\mathfrak{fl}\langle a,b\rangle_a\sube \mathfrak{fl}\langle a,b\rangle$. In other words, the latter two ``sub $a$'' Lie algebras are
identical as subalgebras of $T$. Note that the first formulation 
shows that the subalgebra is in $R$
and is normalized by $a$ and $b+c$, while the second formulation 
shows that it is centralized
by $c$ and $d$. Thus this Lie subalgebra is an ideal in $T$. Similarly we have
$\mathfrak{fl}\langle b+c,d\rangle_d = \mathfrak{fl}\langle c,d\rangle_d\sube \mathfrak{fl}\langle c,d\rangle$
is also an ideal of $T$.
Set $R_0 = \mathfrak{fl}\langle a,b\rangle_a+ \mathfrak{fl}\langle c,d\rangle_d\sube 
\mathfrak{fl}\langle a,b\rangle\oplus \mathfrak{fl}\langle c,d\rangle$ so $R_0\sube R$, $R_0\nor T$
and $T/R_0$ is naturally isomorphic to the two dimensional commutative algebra
$Kb+Kc$. Furthermore, $R/R_0$ corresponds to
the subspace $K(b+c)$, so $R/R_0$ is one dimensional. 
In addition, $R\nor T$ and $T/R$ is
one dimensional.
\end{proof}

\begin{step2}
The relations of $R$.
\end{step2}

\begin{proof}
Let $F=\mathfrak{fl}\langle x,y,z\rangle$ be the free Lie algebra on generators $x,y,z$
and consider the epimorphism $\varphi\colon F\to R$ given by
$\varphi(x)=a,\ \varphi(y)=b+c$ and $\varphi(z)=d$. We will precisely determine
the kernel of $\varphi$. To this end, let $I$ be the ideal 
of $F$ generated, as a Lie ideal, by the commutators
\[ h(m,n)=[ x\mdot{(\ad y)^m}, z\mdot{(\ad y)^n} ] \qquad \mathrm{for\ all}\quad m,n\in\mathbb{Z}^+.\]
Since the image under $\varphi$ of $x\mdot{(\ad y)^m}$ is contained in
$\mathfrak{fl}\langle a,b+c\rangle_a\sube \mathfrak{fl}\langle a,b\rangle$ and the image of
$z\mdot{(\ad y)^n}$ is contained in $\mathfrak{fl}\langle b+c,d\rangle_d\sube 
\mathfrak{fl}\langle c,d\rangle$,
it is clear that $I\sube \ker\varphi$. We will prove the equality of these
two ideals by looking closer at
the structure of the Lie algebra $F/I$.

Let $\cmpl{\phantom{x}}\colon F\to F/I$ denote the natural epimorphism.
Since $I\sube \ker\varphi$, the map $\varphi$ factors through $F/I$ and there
is an epimorphism $\cmpl{\varphi}\colon \cmpl{F}\to R$ given by
$\cmpl{\varphi}(\cmpl{x}) = a$,  $\cmpl{\varphi}(\cmpl{y}) = b+c$ and
$\cmpl{\varphi}(\cmpl{z}) = d$. Note that $\cmpl{\varphi}$ maps $\langle 
\cx,\cy\rangle$ onto $\langle a,b+c\rangle$ and the latter Lie algebra is free
on the two generators. Thus $\langle\cx,\cy\rangle$ is also free and
$\cp\colon \fl\langle \cx,\cy\rangle\to \fl\langle a,b+c\rangle$ is an isomorphism. In particular,
$\cp\colon \fl\langle \cx,\cy\rangle_{\cx} \to \fl\langle a,b+c\rangle_a$ is also an isomorphism.
Note that $\fl\langle \cx,\cy\rangle_{\cx}$ is normalized by $\cx$ and $\cy$. Furthermore
it is centralized by $\cz$ since $h(m,0)\in I$ for all $m\in\mathbb{Z}^+$. Thus
$\fl\langle \cx,\cy\rangle_{\cx}$ is an ideal in $\cmpl F$ and similarly so is
$\fl\langle \cy,\cz\rangle_{\cz}$. Note that the relations $h(m,n)\in I$ imply that
these two ideals commute elementwise. Hence
$\cmpl{F}_0 = \fl\langle \cx,\cy\rangle_{\cx}+ \fl\langle \cy,\cz\rangle_{\cz}$ is the direct sum
of the two ideals and we conclude easily that
$\cp\colon \cmpl{F}_0\to R_0$ is an isomorphism.

Finally note that $R/R_0$ is one dimensional generated by the
image of $b+c$, and $\cmpl{F}/\cmpl{F}_0$ is generated by the image
of $\cy$.  With this, we conclude that $\cp$ is an isomorphism and
hence $I=\ker\varphi$. Thus, $I$ is the ideal of relations
of $R$.
\end{proof}

\begin{step3}
$I$ is not finitely generated as a Lie ideal of $F$
and hence $R$ is not finitely presented.
\end{step3}

\begin{proof}
If $I$ is finitely generated as an ideal of $F$, then 
it is generated as a Lie ideal by finitely many of the elements
$h(m,n)$. Choose an integer $s$ larger than 
the sums $m+n$ for these finitely many generators of $I$ and let $M$ denote the ring
of $(s+3)\times (s+3)$ matrices over $K$. We label the rows and columns
of $M$ by the numbers $\lset 0,1,\ldots, s\rset$ and the additional symbols
$*$ and $\bullet$. In addition, to avoid subscripts, we let
$e(i,j)$ denote the usual matrix units. Now set
$u=e(*,0)$, $w= e(s,\bullet)$ and $v=\sum_{i=0}^{s-1} e(i,i+1)$.

Working in the Lie algebra $\mathcal{L}(M)$ we see that
$u\mdot (\ad v)^m = e(*,m)$ and $w\mdot (\ad v)^n = \pm e(s-n,\bullet)$
for $0\leq m,n\leq s$.
In particular, if $\theta$ is the Lie homomorphism
$\theta\colon F\to \mathcal{L}(M)$ given by $\theta(x)=u$, $\theta(y)=v$ and
$\theta(z)=w$, then $\ker\theta\nor F$ contains those $h(m,n)$
with $m+n<s$ since $e(*,i)$ commutes with $e(j,\bullet)$ if
and only if $i\neq j$. But note that $\ker\theta$ does not contain $h(0,s)$,
so those $h(m,n)$ with $m+n<s$ cannot generate all $h(m,n)$
as an ideal of $F$.
With this 
contradiction, we
conclude from Lemma~1.1 that $R$ is not finitely presented.
\end{proof}

\section{Rings and Algebras}

Suppose $R\supe S$ are rings with the same 1. If $R$ is a finitely generated right or left
$S$-module and if $S$ is finitely generated as a ring, then $R$ is certainly also finitely generated
as a ring. Thus it is natural to ask whether the finitely presented property lifts from $S$ to $R$. As we see
below, the answer is ``no''.

\begin{lemma}
There exist rings $R\supe S$ and a central idempotent $e$ of $R$ such that $R=Se+S(1-e)$. Furthermore,
$S$ is finitely presented, but $R$ is not.
\end{lemma}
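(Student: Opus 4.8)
The plan is to build $R$ and $S$ concretely so that the central idempotent $e$ makes $R$ split as $R = Se \oplus S(1-e)$, i.e.\ as a direct sum of two rings $B = Re$ and $C = R(1-e)$, with $S$ sitting inside diagonally. By Lemma~1.8(i), $R = B \oplus C$ is finitely presented if and only if both $B$ and $C$ are. So the strategy is: choose $C$ to be a ring that is \emph{not} finitely presented (but is finitely generated), choose $B$ to be something harmless and finitely presented, and arrange $S \subseteq R$ to be finitely presented while still generating $R$ together with $e$. The natural source for a finitely generated, non-finitely-presented associative algebra is the group ring of a finitely generated non-finitely-presented group, via Theorem~2.3: take $C = K[\ZZ \wr \ZZ]$, which by Lemma~2.4 and Theorem~2.3 is finitely generated but not finitely presented over $K$.

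\medskip
\noindent Concretely, I would let $C = K[H]$ where $H = \ZZ\wr\ZZ$ is finitely generated but not finitely presented, let $B = K$ (or any fixed finitely presented algebra, e.g.\ $K[t]$ — but $K$ suffices), and set $R = B \oplus C$. The point now is to exhibit a subring $S \subseteq R$, containing no nontrivial idempotent splitting of its own, that \emph{is} finitely presented and satisfies $R = Se + S(1-e)$ with $e = 1_B \oplus 0 = (1,0)$. The obvious candidate is to take $S$ to be (isomorphic to) a group ring $K[G]$ where $G$ is a finitely presented group mapping onto $H$: embed $S = K[G] \hookrightarrow R = K \oplus K[H]$ via $g \mapsto (\varepsilon(g), \bar g)$, where $\bar g$ is the image of $g$ in $H$ and $\varepsilon$ is the augmentation. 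Then $S$ contains $(1,1) = 1_R$, and $Se + S(1-e)$ contains $(1,0)$ and $(0,\bar g)$ for all $g$, hence contains $(0,K[H]) = C$ and $(K,0) = B$, so $R = Se + S(1-e)$. For $G$ one takes any finitely presented group surjecting onto $\ZZ\wr\ZZ$ — for instance a free group $G$ of rank $2$, since $\ZZ\wr\ZZ$ is $2$-generated; then $K[G]$ is finitely presented by Lemma~2.2 (Lemma 2.2 handles free groups), and in fact one could even take $G$ to be a $2$-generated free group so that $S = K\langle x_1, x_2, y_1, y_2\rangle / (x_iy_i - 1, y_ix_i - 1)$ is visibly finitely presented.

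\medskip
\noindent So the key steps, in order, are: (1) recall from Theorem~2.3 and Lemma~2.4 (with $G = \ZZ$) that $C := K[\ZZ\wr\ZZ]$ is finitely generated but not finitely presented as a $K$-algebra; (2) set $B := K$, $R := B \oplus C$, and $e := (1,0)$, noting $e$ is a central idempotent; (3) pick a $2$-generated free group $G$ so that $S := K[G]$ is finitely presented (Lemma~2.2) and there is a surjection $G \twoheadrightarrow \ZZ\wr\ZZ$, inducing an algebra map $K[G] \twoheadrightarrow C$; (4) embed $S \hookrightarrow R$ by $s \mapsto (\varepsilon(s), \pi(s))$ where $\pi$ is the composite $K[G] \to C$ and $\varepsilon$ the augmentation to $K = B$ — check this is an injective unital ring homomorphism; (5) verify $R = Se + S(1-e)$, which is immediate since the images of $e$, $S(1-e) \ni (0,\pi(s))$ span both summands; (6) conclude from Lemma~1.8(i) that $R$ is \emph{not} finitely presented, since its direct summand $C$ is not.

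\medskip
\noindent The main obstacle is step (4)–(5): one must be slightly careful that the diagonal-type embedding $s \mapsto (\varepsilon(s),\pi(s))$ is actually injective and that $Se + S(1-e)$ genuinely fills out all of $R$ rather than just a subring. Injectivity holds because already the second coordinate $\pi$ is... no — $\pi$ need not be injective, so I must instead use that the \emph{pair} $(\varepsilon, \pi)$ is injective, which requires $\ker\varepsilon \cap \ker\pi = 0$ in $K[G]$; this fails for general $G$, so the genuinely safe choice is to let $\pi$ itself be injective, i.e.\ take $G$ to map \emph{isomorphically}-on-a-large-enough-part — cleanest is simply to take $S = K[F_2]$ with $\pi$ the identity-style embedding is impossible since $F_2 \not\cong \ZZ\wr\ZZ$. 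The honest fix: replace the worry by taking $R := K[F_2] \oplus C$ no longer, and instead just take $S$ to be \emph{the image of $K[F_2]$ under a split embedding}: let $G = F_2$, let $\pi\colon K[G]\to C$ be the surjection, set $S := \{(\varepsilon(s), \pi(s)) : s \in K[G]\} \subseteq R$; this $S$ is a homomorphic image of the finitely presented $K[G]$, and to see $S$ itself is finitely presented one notes $S \cong K[G]/(\ker\varepsilon \cap \ker\pi)$, whose defining ideal is generated (modulo the relations of $K[G]$) by the finitely many extra relations needed — but this is exactly where care is needed, so in the writeup I would instead simply \emph{arrange} $\pi$ to be injective by enlarging $C$: replace $C = K[\ZZ\wr\ZZ]$ by $C = K[F_2]/(\text{finitely generated ideal that is not finitely generated after adding one relation})$ — i.e.\ reuse Step~3 of Theorem~2.5's Lie/group machinery in the ring setting, letting $S = K[F_2]$ (finitely presented), $R = S \oplus Se'$ for a central idempotent via a carefully chosen ideal. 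I expect the final clean construction to take $S$ a finitely presented algebra surjecting onto a non-finitely-presented algebra $C$, form $R = S \times C$ pulled back appropriately, and the one subtle verification is that the resulting $S$-inside-$R$ is still finitely presented and that $e$ splits off exactly the bad factor.
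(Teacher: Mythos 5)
Your overall strategy --- write $R$ as a direct sum with one summand finitely generated but not finitely presented, take $e=(1,0)$, and realize $S$ as a diagonally embedded finitely presented ring --- is the paper's strategy, and your concrete choice $C=K[\ZZ\wr\ZZ]$ (via Lemma~2.4 and Theorem~2.3) is a perfectly good instance of the paper's more abstract ``$\overline{T}$ is any finitely generated, non-finitely-presented homomorphic image of a finitely presented ring $T$.'' But there is a genuine gap at exactly the point you flag and never resolve: with $B=K$ and the embedding $s\mapsto(\varepsilon(s),\pi(s))$, the subring $S$ is isomorphic to $K[G]/(\ker\varepsilon\cap\ker\pi)$, not to $K[G]$, and nothing guarantees that this quotient is finitely presented --- that is precisely the kind of assertion the whole lemma is about. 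Your proposed repairs also go nowhere: $\pi$ can never be made injective, since $C$ is not finitely presented while $K[F_2]$ is, and the detour through the machinery of Theorem~2.5 is not needed.

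The missing idea is small but essential: take the first summand to be the finitely presented ring itself rather than $K$. Let $T$ be finitely presented with a homomorphic image $\overline{T}$ that is not finitely presented (your $T=K[F_2]$, $\overline{T}=K[\ZZ\wr\ZZ]$ works, as does any free ring mapping onto any finitely generated non-finitely-presented ring), set $R=T\oplus\overline{T}$ and $e=1\oplus 0$, and let $S=\{(t,\bar t)\mid t\in T\}$ be the graph of the quotient map. Projection onto the first coordinate is then an isomorphism $S\cong T$, so $S$ is finitely presented with no further verification needed; $Se=T\oplus 0$ and $S(1-e)=0\oplus\overline{T}$ because $t\mapsto\bar t$ is onto, so $R=Se+S(1-e)$; and $R$ is not finitely presented by Lemma~1.3 (equivalently, your Lemma~1.8(i)), since its summand $\overline{T}$ is not. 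This is exactly the paper's proof.
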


\begin{proof}
Let $T$ be a finitely presented ring with a homomorphic image $\cmpl T$ that is not finitely
presented. For example, $\cmpl T$ could be any finitely generated ring that is not finitely
presented and $T$ could be a finitely generated free ring that maps onto $\cmpl T$. Now set
$R=T\oplus \cmpl T$, so that $R$ in not finitely presented by Lemma 1.3. Furthermore, note
that $e=1\oplus 0$ is a central idempotent in $R$.
Now embed $T$ into $R$ via the map $t\mapsto t\oplus \cmpl t$ and let $S$ denote the image
of $T$. Then $R=Se+S(1-e)$ and $S\cong T$ is finitely presented.
\end{proof}

If $R$ is a right or left Noetherian ring, then all two-sided ideals of $R$ are certainly
finitely generated. Thus Lemma 1.2 implies

\begin{lemma}
Let $R$ be a right or left Noetherian ring that is finitely presented. Then all homomorphic
images of $R$ are also finitely presented.
\end{lemma}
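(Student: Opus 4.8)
The plan is to apply Lemma~1.2 directly. First I would observe that every ring homomorphic image of $R$ has the form $R/I$ for a two-sided ideal $I\nor R$, and that $R/I$, being an image of the finitely generated ring $R$, is again finitely generated. Next, since $R$ is right (or left) Noetherian, it satisfies the ascending chain condition on right (respectively left) ideals; as a two-sided ideal is in particular a right (respectively left) ideal, $I$ is finitely generated as a one-sided ideal and hence, a fortiori, finitely generated as a two-sided ideal, that is, finitely generated as a kernel. Finally I would invoke Lemma~1.2 with $B=R$, which is finitely presented by hypothesis, with $A=R/I$, and with $\vt\colon R\to R/I$ the natural projection: since $\ker\vt=I$ is finitely generated as a kernel and both objects are of the same type, Lemma~1.2 gives that $R/I$ is finitely presented.

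Alternatively, one could phrase the argument through Proposition~1.4: a right (or left) Noetherian ring satisfies the ascending chain condition on right (or left) ideals, hence on two-sided ideals, hence on kernels, so Proposition~1.4 applies verbatim. I do not expect any real obstacle here; the only point worth a remark is that one does not need $R$ to be Noetherian on both sides, since a two-sided ideal is automatically a one-sided ideal and so either chain condition already forces every kernel to be finitely generated as a kernel.
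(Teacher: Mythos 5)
Your proposal is correct and follows exactly the paper's route: the paper likewise notes that in a right or left Noetherian ring every two-sided ideal is finitely generated (already as a one-sided ideal) and then invokes Lemma~1.2. The remark that one-sided Noetherianity suffices is implicit in the paper's statement as well.
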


As a consequence, we have the well-known fact that every
finitely generated commutative ring is finitely presented.
In view of this, it is natural to ask whether every affine (that is, finitely generated)
PI-algebra is necessarily finitely presented. Using a slightly simpler variant of an example constructed in \cite{SW}, 
but with the same proof, we show below that
the answer is ``no''.

\begin{lemma} Let $F$ be a field. There exists a finitely generated prime $F$-algebra,
satisfying the multilinear identities of $2\times 2$ matrices, that is not finitely presented.
\end{lemma}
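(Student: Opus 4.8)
The plan is to follow Small and Warfield \cite{SW} and produce an explicit finitely generated $F$-subalgebra $R$ of $M_2(C)$, where $C$ is a suitably chosen commutative affine $F$-algebra (for concreteness, a polynomial ring in a few variables), with $R$ chosen so that it is an order in $M_2$ of a field of fractions yet has an infinite, irredundant list of defining relations. Two of the three required properties are then essentially free. First, since $R\subseteq M_2(C)$ and a multilinear polynomial identity that holds on $M_2(F)$ automatically holds on $M_2(C)$ --- expand each argument in the basis of matrix units and use multilinearity --- the algebra $R$ satisfies every multilinear identity of $2\times 2$ matrices. Second, $R$ is finitely generated by construction.

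For primality, I would arrange the generators of $R$ so that $R$ contains a nonzero $C$-scalar multiple of each of the four matrix units $e_{ij}$; then inverting the nonzero elements of the central subring $C$ gives $R\cdot(C\setminus\{0\})^{-1}=M_2(\operatorname{Frac} C)$, so $R$ is an order in the simple ring $M_2(\operatorname{Frac} C)$. If $aRb=0$ with $a,b\in R$, clearing the (central) denominators yields $a\,M_2(\operatorname{Frac} C)\,b=0$, hence $a=0$ or $b=0$, and $R$ is prime.

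The heart of the proof is that $R$ is not finitely presented, and here I would mimic the proofs of the Stallings and Roos examples (Theorems~2.5 and~3.3). Fix the finite generating set of $R$, let $\pi\colon P\to R$ be the induced epimorphism from the free $F$-algebra $P$ on that set, and put $I=\ker\pi$; by Lemma~1.1 it suffices to show $I$ is not finitely generated as a two-sided ideal. I would write down an explicit countable family of relations $\rho_1,\rho_2,\dots\in I$ that plainly generates $I$, and then, for each $n$, construct a finite-dimensional $F$-algebra $A_n$ --- concretely a matrix ring $M_{d(n)}(F)$ with $d(n)\to\infty$ --- together with an $F$-algebra homomorphism $\psi_n\colon P\to A_n$ killing all the $\rho_i$ with $i\neq n$ but not $\rho_n$. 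Then no finite subfamily of the $\rho_i$ generates $I$, so $I$ is not finitely generated and $R$ is not finitely presented.

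The main obstacle is the construction of the pairs $(A_n,\psi_n)$: one needs a finite-dimensional ``truncated model'' of $R$ in which all but one of the governing relations hold while the excluded one visibly fails, the associative analogue of the symmetric-group gadget on $\{0,1,\dots,r-1,*,\bullet\}$ that drives Theorem~2.5 and of the $(s+3)\times(s+3)$ matrix-unit gadget that drives the Roos example. Once the right device is found, checking that it separates the relations as claimed is routine, and everything else --- finite generation, the identities, primality --- follows immediately from the construction.
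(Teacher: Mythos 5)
Your proposal correctly disposes of the two easy properties (the multilinear identities pass to any subalgebra of $M_2(C)$, and finite generation is by fiat), but the entire mathematical content of the lemma is concentrated in the step you explicitly leave open: the construction of the algebra together with an irredundant infinite family of relations $\rho_1,\rho_2,\ldots$ and the separating representations $\psi_n$. Saying ``once the right device is found, checking is routine'' is not a proof --- for the Stallings and Roos examples the separating gadgets exist because those objects are fiber products of free objects with a visible $\mathbb{Z}$-indexed family of commutators, and there is no indication of what the analogous structure would be for an order in $M_2$ of a rational function field. Moreover, your plan commits you to proving that $R$ \emph{itself} is not finitely presented, i.e.\ that the kernel of a presentation $P\to R$ from a free algebra is not finitely generated as an ideal; for the natural candidate (the Small--Wadsworth ring) this is not what is known, and it may well be false.

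The paper's argument is structured quite differently and sidesteps your obstacle entirely. It takes $A=F[x,y,z]$, $I=yA+zA$, and the finitely generated subring
$R=\left[\begin{smallmatrix} F+I^2 & I\\ I & A\end{smallmatrix}\right]\subseteq M_2(A)$,
and then locates a nonzero principal prime $P\subseteq I^2$ of $A$ so that $Q=M_2(P)\cap R$ is a \emph{prime} ideal of $R$ that is \emph{not finitely generated} as a two-sided ideal. The desired algebra is the quotient $R/Q$: it is prime because $Q$ is prime (not because $R$ is an order in a simple ring, as in your plan), and it is not finitely presented by Lemma~1.1, since a finitely presented image of the finitely generated ring $R$ would force $Q$ to be finitely generated. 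Thus the infinitely-generated object one must exhibit is an ideal of a concrete matrix-type ring --- a computation in commutative algebra with the filtration by powers of $I$ --- rather than the relation ideal of a free algebra, which is what makes the example tractable. If you want to complete a proof along these lines, you should redirect your effort from building separating representations to verifying that such a $Q$ is prime and not finitely generated.
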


\begin{proof}
Following \cite{SW}, let $A=F[x,y,z]$ be the polynomial ring over $F$ in the
three commuting indeterminates. Furthermore, let $I=yA+zA$ and let $R\sube \mathbf{M}_2(A)$ be given by
\[ R =\begin{bmatrix} F+I^2 & I\\ I & A \end{bmatrix}.\]
Then $R$ satisfies the identities of $\mathbf{M}_2(A)$ and it is easy to see that $R$ is a finitely
generated $F$-algebra. Furthermore, if $P$ is a nonzero principal prime ideal of $A$ contained in $I^2$,
then 
\[ Q= \begin{bmatrix} P&P\\P&P \end{bmatrix}\]
is a prime ideal of $R$ that is not finitely generated as a 2-sided ideal. Thus Lemma~1.1
implies that the ring $R/Q$ is a finitely generated prime PI-algebra that is not finitely presented.
\end{proof}

The $K$-algebras $A$ and $B$ are said to be \emph{Morita equivalent}
if their categories of right modules are equivalent. It is known that
this occurs if and only if $B\cong e M_n(A) e$
where $e$ is an idempotent of the matrix ring $M_n(A)$ satisfying
\[M_n(A) e M_n(A) = M_n(A).\] See \cite[Proposition 18.33]{L}. Such
idempotents $e$ are said to be \emph{full}. According to \cite{MS},
the property of being a finitely generated $K$-algebra  is a Morita invariant,
namely it carries over from an algebra to a Morita equivalent one.
As a generalization, we have the result of \cite{AA2}, which asserts that the property
of being finitely presented is also a Morita invariant.
The proof of this result is clever but simple, so we sketch it below. We start
with \cite[Theorem 1]{AA1}.

\begin{proposition}
If $A=A_0\oplus A_1$ is a $\mathbb{Z}_2$-graded $K$-algebra that is
finitely presented and if $A_1^2=A_0$, then the $K$-subalgebra $A_0$ is also
finitely presented.
\end{proposition}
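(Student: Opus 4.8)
The plan is to identify $A_0$ with the degree-zero part of a finite presentation of $A$ and then to verify that the degree-zero part of the relation ideal is still finitely generated. To choose generators adapted to the hypothesis $A_1^2=A_0$, note first that since $A$ is finitely presented it is finitely generated, and replacing a finite generating set by the (still finite) set of homogeneous components of its members, I may assume $A$ is generated by finitely many homogeneous elements. Each degree-zero generator lies in $A_0=A_1^2$, hence is a finite sum of products $uv$ with $u,v\in A_1$; throwing in all these factors alongside the degree-one generators, I obtain a finite set $y_1,\dots,y_n\in A_1$ that still generates $A$. Then the $n^2$ products $y_iy_j$ generate $A_0$ as a $K$-algebra, since every element of $A_0$ is a $K$-linear combination of even-length words in the $y$'s and such a word is a product of the $y_iy_j$; in particular $A_0$ is finitely generated.

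Next I would set up a graded presentation. Let $F=K\langle X_1,\dots,X_n\rangle$ be the free $K$-algebra, $\mathbb{Z}_2$-graded with each $X_i$ of odd degree, and write $F=F_0\oplus F_1$ for its even and odd parts. A direct inspection shows that $F_0$ is itself the free $K$-algebra on the $n^2$ variables $Z_{ij}=X_iX_j$: the even-length words in the $X$'s biject with the words in the $Z$'s, compatibly with multiplication. The epimorphism $\pi\colon F\to A$ with $\pi(X_i)=y_i$ is graded, so $J=\ker\pi$ is a homogeneous two-sided ideal; by Lemma~1.1 it is finitely generated, and being homogeneous it is in fact generated as an ideal by finitely many homogeneous elements $f_1,\dots,f_r$ (take the homogeneous components of a finite generating set), each purely even or purely odd. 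Moreover $\pi$ restricts to an epimorphism $\pi_0\colon F_0\to A_0$ with kernel $J_0:=J\cap F_0$, the even part of $J$. Hence it is enough to show that $J_0$ is finitely generated as an ideal of $F_0$.

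The heart of the argument is this last point. Since $J=\sum_k Ff_kF$ is homogeneous, $J_0=\sum_k (Ff_kF)_0$. Using $F=F_0\oplus F_1$ together with the observation that every odd word factors both as $e\mdot X_j$ and as $X_i\mdot e'$ with $e,e'\in F_0$, one checks that each $(Ff_kF)_0$ is the ideal of $F_0$ generated by finitely many elements of $F_0$: by $f_k$ and the $n^2$ products $X_if_kX_j$ when $f_k$ is even, and by the $2n$ products $X_if_k$ and $f_kX_j$ when $f_k$ is odd. (For example, when $f_k$ is even, $F_1f_kF_1=\sum_{i,j}F_0\,X_if_kX_j\,F_0$, since any product $af_kb$ with $a,b\in F_1$ can be written $e\,(X_if_kX_j)\,e'$ with $e,e'\in F_0$.) Collecting these over the finitely many $f_k$ gives a finite generating set for $J_0$ as an ideal of $F_0$, so $A_0\cong F_0/J_0$ is finitely presented.

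I expect the only genuine difficulty to be the bookkeeping in the final step, namely verifying that the even part of $Ff_kF$ is generated over $F_0$ by the listed elements. This rests entirely on the factorization of an odd word through a single outer variable $X_i$; once that is in hand, the parity case analysis (even versus odd $f_k$) is mechanical, as is the verification that $F_0$ is free on the $Z_{ij}$.
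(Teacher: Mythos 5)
Your argument is correct and follows essentially the same route as the paper's proof: grade the free algebra $F$ on generators mapping into $A_1$, observe that $F_0$ is free on the $n^2$ products $X_iX_j$, and use $F_1=F_0X=XF_0$ to show that the even part of the (homogeneously, finitely generated) relation ideal is generated over $F_0$ by the finite set $M_0\cup M_1X\cup XM_1\cup XM_0X$ --- exactly the paper's generating set, organized by the parity of each relator. The only difference is that you spell out in more detail why the generators may be taken in $A_1$ and why $F_0$ is free, points the paper asserts without elaboration.
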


\begin{proof}
Since $A_1^2=A_0$, it follows that $A$ is finitely generated as an
algebra with generators in $A_1$. Thus we can construct a $K$-algebra epimorphism $\theta$ from
the finitely generated free algebra $F$ in the variables $X=\{ x_1,x_2,\ldots, x_n\}$ onto $A$ with each generator
mapping to $A_1$. Note that $F$ is also $\mathbb{Z}_2$-graded with
$F_0$ spanned by all monomials of even length and $F_1$ spanned by all
monomials of odd length. Clearly $\theta$ preserves the grading, so
$\theta(F_0)= A_0$. Furthermore, $F_0$ is a finitely generated free
algebra,
with variables $x_ix_j$, so the restriction $\theta_0\colon F_0\to
A_0$
is a free presentation of $A_0$. It remains to show that $I_0=\ker
\theta_0$
is a finitely generated ideal of $F_0$.

Since $A$ is finitely presented and $\theta$ preserves the grading,
it is clear that $I=\ker\theta$ is generated by a finite set
$M_0\cup M_1$ of homogeneous elements. Then we have
\[ I = (F_0\oplus F_1)(M_0\cup M_1)(F_0\oplus F_1)\]
and $F_1=F_0X = X F_0$, so we conclude that $I_0=I\cap F_0$ is generated by
the finite set $M_0\cup M_1X \cup X M_1\cup XM_0X$, as required.
\end{proof}

Now for the promised result \cite[Theorem 2]{AA1} and \cite[Theorem 1]{AA2}.

\begin{theorem}\label{AA}
The property of being a finitely presented $K$-algebra is
a Morita invariant.
\end{theorem}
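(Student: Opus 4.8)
The plan is to deduce the theorem from the preceding proposition by equipping a suitable matrix algebra over $A$ with a carefully chosen $\mathbb{Z}_2$-grading. Since Morita equivalence is a symmetric relation, it suffices to prove the following: if $A$ is finitely presented and $B\cong eM_n(A)e$ for some full idempotent $e$ of $M_n(A)$, then $B$ is finitely presented.

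The first ingredient is that $M_m(A)$ is finitely presented for every $m$: indeed $M_m(A)\cong M_m(K)\otimes_K A$, the algebra $M_m(K)$ is finitely presented (more generally, every finite-dimensional $K$-algebra is finitely presented, as one sees by presenting it on a $K$-basis with the finitely many structure-constant relations together with the relation expressing $1$; for $M_m(K)$ one may take the $m^2$ matrix units $x_{ij}$ subject to $x_{ij}x_{k\ell}=\delta_{jk}x_{i\ell}$ and $\sum_i x_{ii}=1$), and $A$ is finitely presented by hypothesis, so Lemma~1.8(ii) applies. The second ingredient is a normalization arranging that $1-e$ is also full. View $M_n(A)$ as the upper-left $n\times n$ corner of $M_{n+1}(A)$, so that $e$ becomes an idempotent of $M:=M_{n+1}(A)$ with $eMe=eM_n(A)e\cong B$. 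Since $M_n(A)eM_n(A)=M_n(A)$ contains the matrix unit $E_{11}$, the idempotent $e$ remains full in $M$; and $1-e=(1_{M_n(A)}-e)+E_{n+1,n+1}$, whose two-sided ideal contains $E_{n+1,n+1}(1-e)E_{n+1,n+1}=E_{n+1,n+1}$ and hence all of $M$, so $f:=1-e$ is full in $M$ as well. Replacing $n$ by $n+1$, I may therefore assume that $e$ and $f=1-e$ are full idempotents of $M=M_n(A)$, with $M$ finitely presented and $B\cong eMe$.

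The third step is the grading. Using the Peirce decomposition $M=eMe\oplus eMf\oplus fMe\oplus fMf$, set
\[ M_0=eMe\oplus fMf,\qquad M_1=eMf\oplus fMe. \]
Since $ef=fe=0$, one checks immediately that $M=M_0\oplus M_1$ is a $\mathbb{Z}_2$-grading of $M$, that $(eMf)^2=(fMe)^2=0$, and that $(eMf)(fMe)=e(MfM)e=eMe$ and $(fMe)(eMf)=f(MeM)f=fMf$ because $MeM=MfM=M$. Hence $M_1^2=eMe\oplus fMf=M_0$, so the preceding proposition shows that $M_0$ is finitely presented. Finally, since $eMe$ and $fMf$ annihilate one another and $e+f=1_M$, the algebra $M_0$ is the direct sum of the $K$-algebras $eMe$ and $fMf$, so Lemma~1.3 yields that $B\cong eMe$ is finitely presented.

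I expect the only genuine obstacle to be the normalization: it is exactly the fullness of $1-e$ that forces $M_1^2$ to exhaust all of $M_0$ (rather than merely a corner of it), which is what makes the preceding proposition applicable. Everything else—the fact about matrix algebras and the verification that the Peirce splitting is a $\mathbb{Z}_2$-grading—is routine.
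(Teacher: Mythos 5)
Your proposal is correct and follows essentially the same route as the paper: reduce to the case where both $e$ and $1-e$ are full by enlarging the matrix algebra (the paper passes to $M_2$ of the algebra rather than to $M_{n+1}(A)$, but the trick is identical), then apply the $\mathbb{Z}_2$-grading by the Peirce decomposition together with the preceding proposition and the direct-summand lemma. Your added justification that $M_m(A)$ is finitely presented via $M_m(K)\otimes_K A$ and Lemma~1.8(ii) fills in a step the paper merely asserts.
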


\begin{proof}
If $A$ is a finitely presented algebra, then so is any matrix ring
${M}_n(A)$.
Thus we need only show that if $A$ is finitely presented and $e$ is a
full idempotent of $A$, then $eAe$ is finitely presented. 

Suppose first that both $e$ and $1-e$ are full. Then
$A$ is $\mathbb{Z}_2$-graded with $A_0=eAe\oplus (1-e)A(1-e)$
and $A_1=eA(1-e)\oplus (1-e)Ae$. Furthermore, since $e$ and $1-e$ are
full, it follows that $A_1^2=A_0$. Thus the previous proposition implies that
$A_0$ is finitely presented and hence so is its direct summand $eAe$.

Finally, suppose only that $e$ is full and
set $\cmpl{A}={M}_2(A)$ and $\cmpl{e} = \begin{bmatrix} e&0 \\
0& 0 \end{bmatrix}\in\cmpl{A}$. Then $\cmpl{A}$ is finitely presented
and $\cmpl{e}$ is a full idempotent of $\cmpl{A}$ with $1-\cmpl{e}$ also full.
Thus by the above,
\[ \cmpl{e}\cmpl{A}\cmpl{e} =\begin{bmatrix} eAe &0\\ 0&0\end{bmatrix}
\cong eAe\]
is indeed finitely presented.
\end{proof}

Now let $V$ be a right $A$-module. Then $V$ is said to be
\emph{finitely presented} if $V\cong F/U$ where $F$ is a finitely
generated free $A$-module and $U$ is a finitely generated submodule
of $F$. The next result follows easily from
Schanuel's Lemma (see \cite[Theorem 1, page 161]{K}).

\begin{lemma} Let $V$ be a finitely presented right $A$-module,
let $F'$ be a finitely generated free $A$-module and let
$\phi\colon F'\to V$ be an epimorphism. Then $U'=\ker\phi$
is a finitely generated submodule of $F'$ and hence $V\cong F'/U'$
is also a finite presentation for the module $V$.
\end{lemma}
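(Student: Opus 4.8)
The plan is to deduce this directly from Schanuel's Lemma, exactly as the surrounding remark suggests. Since $V$ is finitely presented, I would first fix a finite presentation of $V$, that is, a short exact sequence $0\to U\to F\to V\to 0$ in which $F$ is a finitely generated free $A$-module and $U$ is a finitely generated submodule of $F$; write $\pi\colon F\to V$ for this epimorphism, so that $U=\ker\pi$. We are also handed the epimorphism $\phi\colon F'\to V$ with $F'$ finitely generated and free, hence projective, and $U'=\ker\phi$.

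The key step is to apply Schanuel's Lemma to the two presentations $\pi\colon F\to V$ and $\phi\colon F'\to V$ of the common module $V$. Since $F$ and $F'$ are projective, the lemma produces an isomorphism of $A$-modules
\[ U\oplus F'\;\cong\; U'\oplus F.\]
Now the left-hand side is finitely generated, being a direct sum of the finitely generated module $U$ and the finitely generated module $F'$. Hence $U'\oplus F$ is finitely generated as well.

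Finally, I would invoke the elementary fact that a direct summand of a finitely generated module is again finitely generated, since it is a homomorphic image of the whole module under the corresponding projection. Applied to $U'\oplus F$, this shows that $U'$ is finitely generated. Thus $U'=\ker\phi$ is a finitely generated submodule of the finitely generated free module $F'$, and the canonical isomorphism $V\cong F'/U'$ is therefore a finite presentation of $V$, as claimed. I do not expect any real obstacle here: the only points to keep track of are that both middle terms of the presentations are free, so that Schanuel's Lemma applies with no extra projectivity hypothesis to verify, and that finite generation passes to direct summands.
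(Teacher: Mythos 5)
Your proposal is correct and follows exactly the paper's own route: both apply Schanuel's Lemma to the two presentations to obtain $U\oplus F'\cong U'\oplus F$, and then extract $U'$ as a direct summand (equivalently, a homomorphic image) of a finitely generated module. No differences worth noting.
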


\begin{proof}
Since $0\to U\to F\to V$ and $0\to U'\to F'\to V$ are exact,
Schanuel's Lemma implies that $F\oplus U'\cong F'\oplus U$. Indeed,
this holds even if $F$ and $F'$ are merely assumed to be projective.
In particular, $U'$ is a homomorphic image of the finitely generated
module $F'\oplus U$.
\end{proof}

Of course, if $A$ is right Noetherian, then every finitely generated
right $A$-module is finitely presented. For convenience, we introduce a
concrete realization of these modules. Specifically, suppose
$c_1,c_2,\ldots, c_n$ generate the $A$-module $V$ and let
$F= A^n = (A,A,\ldots, A)$ be a free $A$-module of rank $n$.
Then we have an epimorphism $\phi\colon F\to V$ given by
$\phi(a_1,a_2,\ldots, a_n) =\sum_i c_i a_i$. Clearly, \[U=\ker\phi
=\{ (a_1,a_2,\ldots, a_n)\mid \sum_i c_i a_i=0\} \]
and we will think of this as the relation module for $V$ corresponding
to the generators $c_1,c_2,\ldots, c_n$.

\begin{theorem}
Let $A$ be a finitely presented $K$-algebra and let $I$ be a right
ideal of $A$ that is finitely presented as a right $A$-module.
If $AI=A$, then $K+I$ is a finitely presented $K$-algebra.
\end{theorem}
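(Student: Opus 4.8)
The plan is to exhibit a finite generating set for $K+I$, then a finite set of relations, and finally—this is the technical heart—to argue that those relations suffice.

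\textbf{A finite generating set.} Since $AI=A$, choose $a_1,\dots,a_n\in A$ and $c_1,\dots,c_n\in I$ with $\sum_i a_ic_i=1$; enlarging the list if need be, I arrange in addition that $c_1,\dots,c_n$ generate $I$ as a right $A$-module, which is possible because $I$ is finitely generated over $A$. Fix a finite generating set $\Gamma$ of the $K$-algebra $A$ containing $a_1,\dots,a_n$ and set
\[ S=\bigl\{\, c_i\mdot w \ :\ 1\le i\le n,\ w\text{ a word in }\Gamma\text{ of length }\le 2 \,\bigr\}\ \sube\ I, \]
where the empty word is allowed, so each $c_i\in S$. I claim $K+I=\la S\ra$, the $K$-subalgebra generated by $S$; since $S\sube I$ only the inclusion $I\sube\la S\ra$ needs proof. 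Because $c_i\mdot w\mdot 1=c_i\mdot w\mdot\sum_l a_lc_l$, for any word $W=w_1W'$ in $\Gamma$ one gets the ``peeling'' identity
\[ c_i\mdot W\ =\ \sum_l (c_i\mdot w_1a_l)\mdot(c_l\mdot W'), \]
in which the first factor $c_i\mdot w_1a_l$ lies in $S$ (it is $c_i$ times the length-$2$ word $w_1a_l$) while the second factor has a strictly shorter tail. Induction on $|W|$ gives $c_i\mdot W\in\la S\ra$ for every word $W$, and since $I=\sum_i c_iA$, every element of $I$ is a $K$-combination of such $c_i\mdot W$; hence $I\sube\la S\ra$.

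\textbf{A finite set of relations.} As $A$ is finitely presented, fix an epimorphism $\psi\colon\Phi\to A$ of $K$-algebras with $\Phi$ finitely generated free, chosen so that prescribed variables $\hat\gamma$ map to the elements $\gamma\in\Gamma$; by Lemma~1.1 the ideal $L=\ker\psi$ is generated by finitely many $\rho_1,\dots,\rho_r$, and note $1-\sum_l\hat a_l\hat c_l\in L$. Since $I$ is finitely presented as a right $A$-module on $c_1,\dots,c_n$, Lemma~4.6 applied to the relation module $U=\{(y_1,\dots,y_n)\in A^n:\sum_i c_iy_i=0\}$ shows $U$ is generated over $A$ by finitely many vectors $\mathbf v^{(1)},\dots,\mathbf v^{(t)}$. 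Let $\Psi=K\la g_\xi:\xi\in S\ra$ be free on $S$ and $\mu\colon\Psi\to K+I$ the epimorphism $g_\xi\mapsto\xi$. Using $\sum_l a_lc_l=1$ and the peeling identity, each of the following elements of $I$ admits an explicit rewriting as a polynomial in the $\xi$'s, hence yields an element of $\ker\mu$:
\begin{enumerate}
\item for each $\xi=c_i\mdot w\in S$, the identity $\xi=\sum_l (c_i\mdot wa_l)\mdot c_l$;
\item for each $k$, the identity $0=\sum_i c_iv_i^{(k)}=\sum_{i,l}(c_i\mdot v_i^{(k)}a_l)\mdot c_l$;
\item for each $i,j$, the identity $0=c_i\mdot\psi(\rho_j)=\sum_l (c_i\mdot\psi(\rho_j)a_l)\mdot c_l$.
\end{enumerate}
Each family is finite, so the ideal $N_0\sube\ker\mu$ that they generate is finitely generated.

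\textbf{The main obstacle.} It remains to prove $N_0=\ker\mu$; granting this, $K+I\cong\Psi/N_0$ is finitely presented. This is the one substantial step. The plan is to analyze $\Psi/N_0$ directly: relations of type (i) let one rewrite every monomial as a polynomial in the distinguished generators $g_{c_1},\dots,g_{c_n}$ ``with right coefficients'' modelled on words in $\Gamma$; relations of type (iii) then impose exactly the relations $\rho_1,\dots,\rho_r$ of $A$ on those coefficients—the crucial instance being $1=\sum_l a_lc_l$, which is built into (i) and reconstructs the right $A$-action on $I$ inside $\Psi/N_0$—and relations of type (ii) impose exactly the relations $\mathbf v^{(k)}$ defining $I$ as a right $A$-module. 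Comparing $\Psi/N_0$ with the presentation of $I$ as a right $\Phi$-module on $c_1,\dots,c_n$ (relations: the lifted $\mathbf v^{(k)}$ together with the $e_i\rho_j$), one forces the surjection $\Psi/N_0\to K+I$ to be injective. The bookkeeping is routine but somewhat lengthy, and is the real content of the argument.

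\textbf{Remark.} The reduction can be repackaged via Morita theory: in the formal matrix ring $\mathcal R=\begin{bmatrix}K+I&I\\A&A\end{bmatrix}$ the idempotent $e=\begin{bmatrix}1&0\\0&0\end{bmatrix}$ satisfies $e\mathcal Re\cong K+I$ and is \emph{full}—this is exactly where $AI=A$ is used, since then $\mathcal Re\mathcal R=\begin{bmatrix}(K+I)(K+I)&(K+I)I\\A(K+I)&AI\end{bmatrix}=\mathcal R$—so by Theorem~\ref{AA} it suffices to prove $\mathcal R$ finitely presented. Since $\mathcal R\sube M_2(A)$ with $M_2(A)$ finitely presented, this again amounts to a finite relation count of the same flavor as (i)--(iii).
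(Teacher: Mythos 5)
Your generating set and the ``peeling'' identity $c_i\mdot W=\sum_l(c_i\mdot w_1a_l)\mdot(c_l\mdot W')$ are correct, and your Remark correctly identifies the route the paper actually takes: form $\mathcal R=\begin{bmatrix}K+I&I\\A&A\end{bmatrix}$, observe that $e=e_{11}$ is full because $AI=A$, and invoke the Morita invariance of finite presentation (Theorem~\ref{AA}). But the proof has a genuine gap at its decisive step: you never prove $N_0=\ker\mu$. The paragraph headed ``The main obstacle'' is a description of what a proof would have to accomplish, not a proof; the sentence ``the bookkeeping is routine but somewhat lengthy, and is the real content of the argument'' concedes that the real content is missing. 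This is precisely where the paper spends its effort: it builds a finitely presented algebra $B$ from an explicit finite list of generators and relations, establishes a normal form by computing the four corners $eBe$, $eBf$, $fBe$, $fBf$ (Step~2), and then checks injectivity of $B\to\mathcal R$ corner by corner, using the finitely many generators of the relation module of $I$ exactly at the point where one must show that $\sum_i s_i\cmpl a_i\mapsto 0$ forces $\sum_i s_i\cmpl a_i=0$ (Step~3). Nothing in your sketch substitutes for that verification.

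To see that the omitted step is not routine, look at your relations of type (iii): you impose only $\sum_l(c_i\mdot\psi(\rho_j)a_l)\mdot c_l=0$, i.e.\ the instances $c_i\rho_j a_l$ of the ideal $L=\ker\psi$. A general element of $L$ is a two-sided combination $\sum V\rho_jV'$, and a general identification you must kill in $\Psi/N_0$ is of the form ``$c_i\mdot V=c_i\mdot V'$ whenever $\psi(V)=\psi(V')$.'' You must show that peeling reduces every such identity to consequences of (i)--(iii); since peeling from the left breaks the word $V\rho_jV'$ apart before reaching $\rho_j$, this requires an inductive normal-form argument of exactly the kind the paper carries out, together with the type (ii) relations to control the ambiguity in writing an element of $I$ as $\sum_i c_iy_i$. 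Similarly, the well-definedness of your ``explicit rewriting'' of $c_i\mdot V$ as a polynomial in the $g_\xi$ (different parsings of $V$ give a priori different polynomials) is part of what must be proved, not assumed. Until $N_0=\ker\mu$ (equivalently, injectivity of $B\to\mathcal R$) is established, the theorem is not proved.
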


\begin{proof}
If $I=A$, the result is trivial, so we can assume that $I$ is proper.
Note that $AI=A$ implies that $I\neq 0$. Thus by assumption
$I$ has $n\geq 1$ generators $c_1,c_2,
\ldots, c_n$ with relation module generated by the finitely many relations
$g_j=(a_{1j}, a_{2j},\ldots, a_{nj})$.
We fix this notation throughout.

Now we look inside the $2\times 2$ matrix ring over $A$ and consider the
subalgebra
\[ M=\begin{bmatrix} K+I & I\\ A & A \end{bmatrix}.\]
In some sense our goal is to show that $M$ is finitely presented
and we proceed in a series of steps.
\end{proof}

\begin{step1}
The construction of $B$, a finitely presented algebra that mirrors $M$.
\end{step1}

\begin{proof}
First $B$ is a $K$-algebra with 1, and we need the idempotents
corresponding to $e_{11}$ and $e_{22}$. So $B$ has generators $e$
and $f$ with relations that assert that $e$ and $f$ are orthogonal
idempotents that sum to 1.

Next we adjoin the lower right corner of $B$ corresponding to the
idempotent $f$. Specifically, we construct a monomorphism
$\cmpl{\phantom{x}}\colon A \to fBf$ with $\cmpl{1}=f$.
Clearly $\cmpl{A}$ is the $K$-subalgebra of $B$
generated by the images $\cmpl{a}$ of the finitely many generators
$a\in A$ and subject to their finitely many relations. Furthermore,
we insist that for each generator $a\in A$, its image $\cmpl{a}$
belongs to $fBf$ and this is achieved by adding the relations
$f\cmpl{a} =\cmpl{a}=\cmpl{a}f$.

We need just  a few more generators and relations. First $r\in B$ plays the
role of $e_{21}$, so we have $fr=r=re$. Further, for $i=1,2,\ldots, n$,
$s_i\in B$ plays the role of $c_i e_{12} $, so $es_i=s_i= s_if$
and $rs_i=\cmpl{c}_i\in \cmpl{A}$, where in the latter relations, the elements
$\cmpl{c}_i$ are written explicitly in terms of the generators of $\cmpl{A}$.
Finally, for each $g_j=(a_{1j}, a_{2j},\ldots, a_{nj})$, we adjoin the
relation \[\widetilde g_j= s_1 \cmpl{a}_{1j}+ s_2 \cmpl{a}_{2j}+\cdots+ s_n \cmpl{a}_{nj}=0\]
where again, each $\cmpl{a}_{ij}$ is written explicitly in terms of the
generators of $\cmpl{A}$.

Thus $B$, as defined above is a finitely presented $K$-algebra
and we have an algebra homomorphism $\theta\colon B\to M$ given by
$e \mapsto e_{11}$, $f\mapsto e_{22}$, $\cmpl{a}\mapsto a e_{22}$
for all $a\in A$, $r\mapsto e_{21}$ and $s_i\mapsto c_i e_{12}$
for $i=1,2,\ldots,n$. Note that, for each $j$,
\begin{align*}
\theta(\widetilde g_j)&= c_1e_{12} {a}_{1j}e_{22}+ c_2e_{12} {a}_{2j}e_{22}+\cdots+ c_n e_{12} {a}_{nj}e_{22}\\
&= (c_1 {a}_{1j}+ c_2{a}_{2j}+\cdots+ c_n {a}_{nj})e_{12} =0,
\end{align*}
as required.
We will show below that $\theta$ is an isomorphism.
\end{proof}

\begin{step2}
$B$ is a direct sum of its four corners
\[ B= eBe \oplus eBf\oplus fBe\oplus fBf\]
with
$fBf = \cmpl{A}$,
$fBe = \cmpl{A}r$,
$eBf = \sum_i s_i \cmpl{A}$ and
$eBe = Ke + \sum_i s_i \cmpl{A} r$.
\end{step2}

\begin{proof}
The first formula follows from
\[ B = 1B1 = (e+f)B(e+f)\]
and the four summands here are called the corners of $B$.
Notice that a product of corners is either 0 or contained in
another corner. In particular, since all generators of $B$
(other than 1) are contained in corners, we can determine
the corners of $B$ by considering which products of generators they
contain.

We remark that in the argument below, each product $\pi'$
is either empty and hence equal to 1 or necessarily in a corner
that is easy to describe from context.

We start with $fBf$ and, by construction, we know that $fBf\supe
\cmpl{A}$. Now suppose that $\pi\in fBf$ is a nonzero product of generators.
We show by induction on the number of factors
in $\pi$ that $\pi\in \cmpl{A}$. Now $f\pi=\pi \neq 0$
so the left-most factor of $\pi$ is $f$, $r$ or $\cmpl{a}$ for
some generator $a\in A$. In the first and third cases we have
$\pi = f \pi'$ or $\cmpl{a} \pi'$ where $\pi'\in fBf$ is a shorter
product. By induction, $\pi'\in\cmpl{A}$ and then
$\pi\in\cmpl{A}$. On the other hand, if $\pi$ starts with $r$, then
this factor can only be followed by several $e$'s and then some
$s_i$. But then $r e^t s_i = \cmpl{c}_i$, so $\pi =\cmpl{c}_i \pi'$
and again $\pi\in \cmpl{A}$. We conclude that $fBf = \cmpl{A}$.

Next note that $eBf\supe \sum_i s_i\cmpl{A}$. Conversely suppose
$\pi\in eBf$ is a nonzero product of generators. Then $e\pi=\pi
\neq 0$ 
so $\pi$ starts on the left with $e$ or some $s_i$. In the former
case, $\pi=e \pi'$ and $\pi'$ is shorter. Thus, by induction,
$\pi'\in \sum_i s_i\cmpl{A}$ and hence $\pi=e\pi'\in
\sum_i s_i\cmpl{A}$. On the other hand, if $\pi= s_i\pi'$,
then $\pi'\in fBf=\cmpl{A}$ and hence $\pi\in s_i\cmpl{A}$.

The argument for $fBe$ is similar except that we look at the right-most
factor of $\pi$. Indeed, $fBe\supe \cmpl{A} r$ and if $\pi\in fBe$
is a nonzero product of generators, then $\pi e = \pi\neq 0$,
so the right-most factor of $\pi$ is either $e$ or $r$. Thus
either $\pi =\pi' e$ with $\pi'$ shorter or $\pi=\pi' r$ with
$\pi'\in fBf =\cmpl{A}$.

Finally, $eBe$ contains $Ke$ and also
\[ eBf{\cdot} fBe = \sum_i s_i \cmpl{A}{\cdot}\cmpl{A} r =\sum_i
s_i\cmpl{A}r.\]
Conversely suppose $\pi\in eBe$ is a nonzero product of generators. If
$\pi$ has just one factor then it is $e$. Otherwise,
$e\pi e=\pi\neq 0$ and ignoring initial and final factors of $e$,
we see that $\pi = s_i\pi' r$ for some $i$. Then $\pi'
\in fBf =\cmpl{A}$, so $\pi\in s_i \cmpl{A} r$.
\end{proof}

\begin{step3}
$\theta\colon B\to M$ is an isomorphism. Hence $M$ is finitely
presented and therefore so is $K+I$.
\end{step3}

\begin{proof}
Notice that $\theta(\cmpl{a}) = a e_{22}$ for all $a\in A$,
so $\theta$ is one-to-one and onto from the $f,f$ corner
$fBf= \cmpl{A}$ of $B$ to the $e_{22},e_{22}$ corner $A e_{22}$
of $M$. Similarly, $\theta(\cmpl{a}r)= a e_{21}$ shows
that $\theta$ is one-to-one and onto from the $f,e$ corner
$fBe =\cmpl{A}r$ of $B$ to the $e_{22}, e_{11}$ corner
$A e_{21}$ of $M$.

Next
\[ \theta(eBf)=\theta(\sum_i s_i\cmpl{A})=e_{12} \sum_i c_iA=
e_{12}I\]
so in this case $\theta$ is onto. To check that it is one-to-one on $eBf$,
suppose $\sum_i s_i\cmpl{a}_i$ maps to 0 for suitable $a_i\in A$.
Then
\[0=\theta(\sum_i s_i\cmpl{a}_i) = e_{12} \sum_i c_i a_i\]
so
$\sum_i c_i a_i=0$. Thus $(a_1,a_2,\ldots, a_n)$ is in the relation module for
$c_1,c_2,\ldots, c_n$ and hence it is an $A$-linear combination of the
generators $g_j$'s. Since 
\[\widetilde g_j= s_1 \cmpl{a}_{1j}+ s_2 \cmpl{a}_{2j}+\cdots+ s_n \cmpl{a}_{nj}=0\]
in $B$, by assumption, it therefore follows that $\sum_i s_i \cmpl{a}_i=0$.

Finally, note that
\[\theta(eBe) = Ke_{11} +\sum_i e_{12} c_i A e_{22}e_{21}=(K+\sum_i
c_iA) e_{11}=(K+I) e_{11}\]
so $\theta$ maps $eBe$ onto the $e_{11}, e_{11}$ corner of $M$.
If $\beta = ke+\sum_i s_i \cmpl{a}_i r$ maps to 0
for some $k\in K$ and $a_i\in A$, then
\[ 0=\theta(\beta)=\theta (ke+\sum_i s_i \cmpl{a}_i r)= (k+\sum_i c_i a_i) e_{11}\]
and hence $\sum_i c_i a_i = -k$. Since $I\neq A$ by our assumption,
it follows that $k=0$ and then $\sum_i c_i a_i =0$.
As above, we conclude that $\sum_i c_i \cmpl{a}_i =0$,
so $\sum_i c_i \cmpl{a}_i r =0$.
We conclude that $\beta=0$, and $\theta$ is
indeed one-to-one on $eBe$.

Thus $\theta$ is an isomorphism, so $M$ is isomorphic to $B$
and hence it is finitely presented. Furthermore, since $AI=A$, we have
\begin{align*}  M e_{11}M &= \begin{bmatrix} K+I& 0\\A&0 \end{bmatrix}
\begin{bmatrix} K+I& I\\0&0 \end{bmatrix}\\
&= \begin{bmatrix} K+I& I\\ A&A \end{bmatrix} =M
\end{align*}
and hence $K+I\cong e_{11}M e_{11}$ is Morita equivalent to $M$.
Theorem~\ref{AA} now implies that $K+I$ is also finitely
presented.
\end{proof}

We now consider two examples of interest. First let $A$ be the Weyl algebra
over the field $K$, so that $A$ is generated by $x$ and $y$ subject
to the relation $[x,y]=xy-yx=1$. Then $A$ is the Ore extension
$K[x][y;d]$, where $d$ is the ordinary derivation on the polynomial ring
$K[x]$. It follows from this formulation that $A$ satisfies:
(1) it is finitely presented, (2) it is right and left Noetherian,
(3) it is a domain, and (4) $y$ is not a unit in the ring.
We actually need a slightly larger ring, namely $A'$, the
localization of $A$ at the powers of $x$. Thus $A'$ is the
Ore extension $K[x,x\inv][y;d]$ and again $A'$ satisfies
the above four properties.

Now let $I' = yA'$ be the right ideal of $A'$
generated by $y$. Then $I'$ is proper since $y$ is not a unit,
and it is a free right module of rank 1 since $A'$ is a domain.
Furthermore, $A'I'= A' yA'$
is an ideal containing $xy$ and $yx$ and hence 1. Thus $A'
I'=A'$ and the preceding theorem implies that the
subalgebra $K+I'=K+ yA'$ is finitely presented.
As we see below, this is not the answer we had hoped for.

Indeed, let $W$ be the Witt Lie algebra so that $W$ has the
standard $K$-basis consisting of
the elements $e_i$, with $i\in \mathbb Z$, subject to the 
commutation relations
$[e_i,e_j]=(i-j) e_{i+j}$, for all $i,j\in\mathbb Z$. If $K$ has characteristic 0, then it is clear that $W$ is finitely generated. But surprisingly,
by \cite[page 506]{Ste}, $W$ is also finitely presented. Thus,
by Theorem~3.1, its universal enveloping algebra $B=U(W)$ is a finitely presented
algebra. This algebra has been extensively studied
(see for example \cite{DS} and \cite{ISi}), and in particular it was shown in
\cite{SiW1} that $B$ is neither right nor left Noetherian. We had hoped to use
Lemma 1.2 to exhibit a specific two-sided ideal of $B=U(W)$ that is not finitely
generated.

To this end, observe that we have a homomorphism $\theta$ from $B$ to
$A'$ given by $e_i\mapsto y x^{i+1}$ for all $i\in\mathbb{Z}$,
and the image of $\theta$ is clearly $K+yA'=K+I'$.
If this image were not finitely presented, then by Lemma 1.2, the kernel
of $\theta$ would be a two-sided ideal of $B$ that is not finitely
generated. Alas, as we have seen, $K+I'$ is finitely presented,
and hence it follows from Lemma 1.1 that $\ker\theta$ is a finitely generated two-sided ideal.

This argument has been generalized in \cite{SiW2}.

\section{Hopf Algebras}

The group ring and enveloping algebras examples of Lemmas 2.4 and 3.1 surely extend to Hopf
algebras. Let $H$ be a Hopf algebra with counit $\ve\colon H\to K$, antipode $S\colon H\to H$
and comultiplication $\Delta\colon H\to H\otimes H$. We consider the ring of $3\times 3$ matrices over $H$ and the 
subring $\mathcal{R}$ of all elements
\[ [h,a,b,c] =\begin{bmatrix} \ve(h) & a & c\\ & h & b\\ & & \ve(h) 
\end{bmatrix} \]
with $h,a,b,c\in H$. For all $h\in H$, set $\cmpl h =[h,0,0,0]$ and note that the map
$h\to \cmpl h$ defines an isomorphism of Hopf algebras. Conversely, the map $\mathcal{R}\to H$
given by $[h,a,b,c]\to h$ is an algebra epimorphism with kernel $\mathcal{K}$ consisting of all elements of
$\mathcal{R}$ of the form $[0,a,b,c]$. We have the following product formulas
\[ \cmpl h\mdot [0,a,b,c] =[ 0, \ve(h) a, hb, \ve(h)c] \]
and
\[ [0,a,b,c]\mdot \cmpl h = [0, ah, b\ve(h), c\ve(h)].\]

Now we extend the inner action of $H$ on $H$ to an action on $\mathcal{K}$ by defining
\[ h * [0, a,b,c] = \sum_{(h)} {\cmpl h}_1\mdot [0,a,b,c] \mdot S({\cmpl h}_2) = [0,a', b', c']\]
where
\begin{align*} a' &= a\mdot \sum_{(h)} \ve{(h_1)} S(h_2) = a\mdot S(h),\\
b'&= \sum_{(h)} h_1\ve(S(h_2)) \mdot b = h\mdot b,\  \mathrm{and}\\
c' &=\sum_{(h)} \ve(h_1) \ve(S(h_2)) \mdot c = \ve(h) \mdot c.
\end{align*}
In particular, $H$ stabilizes 
\[ \mathcal{K}_0 = \lset [0,a,b,c] \mid a,b,c\in H,\ a=S(b)\rset.\]

\section{The Fundamental Problem}

The study of finitely presented algebras is an isolated subject. One proves facts
about such algebras and also decides whether certain examples are
finitely presented or not. The fundamental problem is to use such results in
other fields, namely to obtain theorems that do not have
``finitely presented'' in their statement, but use the concept intrinsically
in their proof. At present, we know of only one such result,
an application to the study of twisted group algebras. We briefly
outline the argument below.

Let $K^t[G]$ denote a twisted group algebra of the multiplicative group $G$
over the field $K$. Then $K^t[G]$ is an associative $K$-algebra with $K$-basis
$\cmpl{G}=\lset \cmpl{x}\mid x\in G\rset$ and with multiplicative defined
distributively by $\cmpl{x}\mdot \cmpl{y}=\tau(x,y) \cmpl{xy}$ for all
$x,y\in G$. Here $\tau\colon G\times G\to K^\bullet$ is the \emph{twisting
function} and, as is well-known, the associative law in $K^t[G]$ is
equivalent to the fact that $\tau$ is a 2-cocycle. Furthermore, we can
always assume that $\cmpl{1}=1$ is the identity element of $K^t[G]$.
With this assumption, the \emph{trace map}
$\tr\colon K^t[G]\to K$ is defined linearly by $\tr\cmpl{x}=0$
if $1\neq x\in G$ and $\tr 1=1$. In other words, $\tr\alpha$ is the identity
coefficient of $\alpha\in K^t[G]$. It is clear that $\tr \alpha\beta=
\tr\beta\alpha$ for all $\alpha,\beta\in K^t[G]$.

Of course, the ordinary group algebra $K[G]$ corresponds to the twisting function
with constant value 1. In \cite{K}, Kaplansky proved that if $e$ is an
idempotent in the complex group algebra $\CC[G]$, then $\tr e$ is real and bounded between 0 and 1. Next he observed that if $\sigma$ is any field automorphism of $\CC$, then $\sigma$ yields a ring automorphism of $\CC[G]$,
so $\sigma(e)$ is also an idempotent. Thus $\sigma(\tr e) = \tr(\sigma(e))$
is also real and it follows that $\tr e$ is a totally real algebraic
number. This led Kaplansky to conjecture that these traces are actually always
rational. 

In \cite{Z}, Zalesski proved the conjecture via a beautiful
two step argument. First, he worked over fields $F$ of characteristic
$p>0$ and used properties of the $p$-power map to show that
$\tr e\in\mathrm{GF}(p)$. Then he considered fields $K$ of characteristic 0,
and he constructed suitable places to fields $F$ of characteristic $p>0$ for 
infinitely many different primes $p$. In this way he obtained homomorphisms from
subrings of $K[G]$ to $F[G]$. Since $\tr e$ always mapped to elements
of the prime subfield of $F$, the Frobenius Density Theorem allowed
him to conclude that $\tr e\in\QQ$.

More recently, the paper \cite{P} considered whether this result also holds for twisted group
algebras. In characteristic $p>0$, Zalesski's argument carries over
almost verbatim and again one concludes that $\tr e\in \mathrm{GF}(p)$.
The difficulty occurs in fields of characteristic 0, when trying to map
subrings of $K^t[G]$ to $F^t[G]$. Indeed, note that the basis $\cmpl{G}$
generates a subgroup $\mathcal{G}$ of the unit group of
$K^t[G]$ and certainly we expect $\mathcal{G}\cap K^\bullet$
to be nontrivial. Thus, when we map a subring $R$ of $K^t[G]$
to $F^t[G]$, we need $R\supe \mathcal{G}$, but we also need
$\mathcal{G}\cap K^{\bullet}$ to not map to 0. Unfortunately, it is easy
to construct examples, using Lemma 2.4, of twisted group algebras where
$\mathcal{G}\cap K^\bullet$ fills up the entire nonzero part
of the field. Thus the best we can do with this part of Zalesski's
argument is to prove the result under the assumption that
$\mathcal{G}\cap K^\bullet$ is a finitely generated group.

So where to we go from here? Write $e = \sum_{i=1}^n k_i \cmpl{x_i}\in K^t[G]$.
Then certainly we can assume that $G=\langle x_1,x_2,\ldots, x_n\rangle$
is finitely generated. Furthermore, the equality $e^2=e$ is equivalent
to a finite number of group relations of the form $x_r x_s=x_t$ and $x_rx_s= x_u x_v$,
along
with formulas relating the coefficients and twisting.
With a little care, (see \cite[Lemma 3]{P}), one can then show that there
is a finitely presented group $H$, a twisted group algebra $K^t[H]$
and an idempotent $f\in K^t[H]$ such that $\tr e = \tr f$.
This allows us to assume that the group $G$ above is also finitely
presented.

Now by changing the basis by factors of $K^\bullet$ if necessary, we can 
clearly assume that $\mathcal{G}$
is generated by $\cmpl{x_1},\cmpl{x_2},\ldots,\cmpl{x_n}$.
Furthermore, note that the map $ k\cmpl{x}\mapsto x$ yields a
homomorphism from $\mathcal{G}$ onto $G$ with kernel $\mathcal{G}\cap
K^\bullet$. Thus since $\mathcal{G}$ is finitely generated and $G$
is finitely presented, Lemma 1.1 implies that $\mathcal{G}\cap K^\bullet$
is finitely generated as a normal subgroup of $\mathcal{G}$.
Thus this central subgroup of $\mathcal{G}$ is finitely generated as a group
and Zalesski's argument can now come into play. This completes
the proof of \cite[Theorem 5(i)]{P}, a result that does not mention
finitely presented groups, but certainly uses them.

\end{document}